\documentclass[12pt]{amsart}

\usepackage{enumitem}
\usepackage{psfrag}
\usepackage{graphicx}
\usepackage{pinlabel}
\usepackage{graphicx}
\usepackage{amsmath}
\usepackage{amssymb}
\usepackage{amscd}
\usepackage{picinpar}
\usepackage{times}
\usepackage{pb-diagram}
\usepackage{graphicx}
\usepackage{wrapfig}
\usepackage{xspace}
\usepackage{hyperref}
\usepackage{url}
\usepackage{caption}
\usepackage{subcaption}
\usepackage{fancyhdr}
\usepackage{overpic}
\usepackage{color}
\usepackage[square,comma,sort&compress,numbers]{natbib}
\usepackage{latexsym}
\usepackage{mathrsfs}
\usepackage{amsfonts}
\usepackage{hyperref}
\usepackage{amsthm}
\usepackage{appendix}
\usepackage{pdfsync}

\theoremstyle{definition}
\newtheorem{theorem}{Theorem}[section]
\newtheorem{lemma}[theorem]{Lemma}

\newtheorem{corollary}[theorem]{Corollary}
\newtheorem{definition}[theorem]{Definition}

\newtheorem{remark}[theorem]{Remark}
\newtheorem*{theorem*}{Theorem}
\def\qed{\hfill{Q.E.D.}\smallskip}

\begin{document}

\title{\bf The Calabi flow with prescribed curvature on finite graphs}
\author{Yi Li, Jie Wang, Pingsan Yuan, Chao Zheng}

\date{\today}

\address{School of Mathematical Sciences, University of Science and Technology of China, Hefei, 230026, P.R.China;}
\email{ustcyili@ustc.edu.cn}

\address{School of Mathematics and Statistics, Jiangsu Normal University, Xuzhou, 221116, P.R.China;}
\email{jiewang@jsnu.edu.cn}

\address{School of Mathematics and Statistics, Wuhan University, Wuhan, 430072, P.R.China;}
\email{pingsanyuan@whu.edu.cn}

\address{International Center for Mathematical Research (BICMR), Beijing (Peking) University, Beijing, 100871, P.R. China}
\email{chaozheng@pku.edu.cn}

\thanks{MSC (2020): 52C26}

\keywords{Calabi flow; Lin-Lu-Yau curvature; prescribed curvature}

\begin{abstract}
In this paper, we investigate the prescribed curvature problem associated with a special Lin-Lu-Yau curvature on finite graphs of girth at least 6.
We define the corresponding Calabi flow for this curvature type, and establish an equivalent characterization of the problem, namely, the solution to the Calabi flow exists globally in time and converges if and only if there exists a weight function that realizes the prescribed curvature.
In particular, for constant curvature weights, we prove that the solution to the Calabi flow exists globally in time and converges under certain topological conditions.
\end{abstract}

\maketitle

\section{introduction}

\subsection{Background}

Geometric flows have emerged as powerful tools in differential geometry for the study of canonical metrics with prescribed curvature on smooth manifolds.
The Ricci flow, first introduced by Hamilton \cite{Hamilton}, has revolutionized this field and paved the way for the resolution of several landmark problems, including the Poincar\'{e} conjecture \cite{Perelman1,Perelman2,Perelman3} and the sphere theorem \cite{BS}.
Motivated by the pursuit of constant curvature metrics, Calabi pioneered the variational analysis of the Calabi energy and introduced the celebrated Calabi flow \cite{Calabi1,Calabi2}. 
On closed Riemannian surfaces, the Calabi flow admits the following formulation
\begin{equation*}
\frac{\partial g}{\partial t} = \Delta_g K \cdot g,
\end{equation*}
where $g$ is the Riemannian metric, $K$ is the Gaussian curvature, and $\Delta_g$ is the Laplace-Beltrami operator  associated with the metric $g$.
Results concerning the longtime existence and convergence of the Calabi flow on closed surfaces were established in \cite{Chen,Chru}, and further references can be found in \cite{Chang1,Chang2,CLT}.

Polyhedral surfaces, regarded as discrete analogues of smooth Riemannian surfaces, serve as natural settings for defining combinatorial curvature flows. 
The notion of circle patterns was introduced by Thurston \cite{Thurston} as a foundational tool for studying hyperbolic structures on 3-manifolds. 
He constructed circle patterns equipped with prescribed intersection angles on triangulated surfaces.
The induced polyhedral metric generally admits conical singularities at vertices, which are characterized by the angle deficit curvatures. 
Motivated by Hamilton's work on the Ricci flow \cite{Hamilton},
Chow-Luo \cite{Chow-Luo} introduced the combinatorial Ricci flow on closed triangulated surfaces, 
which acts as a canonical discrete counterpart of Hamilton's smooth Ricci flow. 
Under suitable combinatorial assumptions, 
they proved that the combinatorial Ricci flow admits longtime existence and converges exponentially fast to Thurston's circle patterns on closed triangulated surfaces \cite{Chow-Luo}.
Since then, extensive research has been devoted to combinatorial curvature flows associated with various discrete conformal structures on surfaces. 
Typical developments in this direction include the combinatorial Yamabe flow \cite{L1}, the combinatorial Calabi flow \cite{Ge1,Ge2}, the fractional combinatorial Calabi flow \cite{Wu-Xu}, and the combinatorial $p$-th Calabi flow \cite{L-Z}, all of which are constructed for triangulated surfaces.

Furthermore, geometric flows have been successfully generalized to the discrete setting of graphs. 
In contrast to triangulations of closed compact surfaces, 
a graph consists merely of a vertex set and an edge set, devoid of any facial structure.
The discrete Ricci flow was first proposed by Ollivier \cite{Ollivier}, 
which governs the metric evolution driven by the Ollivier-Ricci curvature and has found broad applications across a variety of mathematical and applied disciplines. 
In recent years, extensive theoretical studies have been carried out on this flow, as well as its extensions to generalized Ollivier-type curvatures, over diverse families of graphs \cite{BHLL,BLLW,MY1,MY2,MY3,TMYZ}.
Notably, all these existing literature imposes a critical constraint: the graph distance function is determined by the edge weights. 
Nevertheless, this restriction is theoretically nonessential. 
For a general graph, the distance metric and edge weights can be defined independently, without being confined by such a fixed dependency.

In \cite{LL}, Lin-Liu investigated the evolutionary dynamics between graph edge weights and discrete curvature, while keeping the graph distance fixed throughout the flow. 
They introduced the Lin-Lu-Yau Ricci flow with prescribed curvature, and established an equivalent characterization connecting the global convergence of the flow to the existence of admissible weight functions. 
We will briefly review the core content of Lin-Liu's work in Subsection \ref{Subsec: LL work}.
In this paper, motivated by Ge's research on the combinatorial Calabi flow \cite{Ge1,Ge2}, Wu-Xu's work on the fractional combinatorial Calabi flow \cite{Wu-Xu}, and Lin-Zhang's study on the combinatorial $p$-th Calabi flow \cite{L-Z}, 
we introduce three classes of Calabi flows defined on graphs with girth at least $6$, and prove an analogous convergence result for these flows.

\subsection{Various Calabi flows on graphs}

Let $G=(V,E,\omega)$ be a finite weighted graph, 
where $V$ denotes the vertex set, $E \subset V \times V$ denotes the edge set, and $\omega\colon E \to \mathbb{R}_+$ is a positive weight function defined on the edge set. 
Let $E = \{e_1, \dots, e_n\}$ and write $\omega= (\omega_1, \omega_2, \dots, \omega_n)$ with $\omega_i = \omega(e_i)$ for each edge index $i$.
We use $\kappa$ to denote the Lin-Lu-Yau curvature, whose detailed definition is presented in Subsection \ref{Subsec: LLY curvature}. 
Accordingly, we write $\kappa= (\kappa_1, \kappa_2, \dots, \kappa_n)$ with $\kappa_i =\kappa(e_i)$ for each edge $e_i$. 
Formula (\ref{Eq: curvature}) reveals that the curvature $\kappa$ depends solely on the weight vector $\omega$. 
Via the parameter transformation $r_i = \ln \omega_i$ for all $i \in \{1, \dots, n\}$, 
the curvature $\kappa$ can be regarded as a function of the parameter vector $r$.

\begin{definition}
On a finite graph with girth at least $6$, the Calabi flow is defined by
\begin{equation}\label{Eq: CF1}
\begin{cases}
\dfrac{d}{dt} r_i = \Delta \kappa_i, \\
r_i(0) = r_{0,i},
\end{cases}
\end{equation}
where $r_i(0) = \ln\omega_i(0)$ is the initial value, and $\Delta$ is the discrete Laplace operator given by
\begin{equation}\label{Eq: Laplace}
\Delta g_i
= -\sum_{j=1}^n \frac{\partial \kappa_i}{\partial r_j}\, g_j
\end{equation}
for any function $g: E \to \mathbb{R}$.
\end{definition}

\begin{remark}
In analogy with the smooth setting, the Calabi flow (\ref{Eq: CF1}) is a negative gradient flow of the Calabi energy
\begin{equation*}
\mathcal{C}(r):=\frac{1}{2} \|\kappa(r)\|^2
=\frac{1}{2} \sum_{i=1}^n \kappa^2_i(r)
\end{equation*}
with respect to the standard Euclidean metric on $\mathbb{R}^n$. 
This geometric interpretation motivates both the definition of the Calabi flow (\ref{Eq: CF1}) and the discrete Laplace operator (\ref{Eq: Laplace}). Moreover, this formula is also used in \cite{LL}.
\end{remark}

For any prescribed curvature $\kappa^*$, 
the Calabi flow with prescribed curvature on a graph of girth at least $6$ is defined as
\begin{equation}\label{Eq: CF2}
\frac{d}{dt}r_i = \Delta (\kappa- \kappa^*)_i,
\end{equation}
which is called the modified Calabi flow in the following.
The associated Calabi energy is given by
\begin{equation}\label{Eq: energy}
\widetilde{\mathcal{C}}(r)
=\frac{1}{2} \|\kappa(r)-\kappa^*\|^2
=\frac{1}{2} \sum_{i=1}^n (\kappa_i(r)-\kappa^*)^2.
\end{equation}

Define the Jacobian matrix
\begin{equation*}
J=\frac{\partial(\kappa_1, \dots, \kappa_n)}{\partial(r_1, \dots, r_n)}
=\begin{pmatrix}
\frac{\partial \kappa_1}{\partial r_1} & \cdots & \frac{\partial \kappa_n}{\partial r_1} \\
\vdots & \ddots & \vdots \\
\frac{\partial \kappa_1}{\partial r_n} & \cdots & \frac{\partial \kappa_n}{\partial r_n}
\end{pmatrix}.
\end{equation*}
It follows directly from (\ref{Eq: Laplace}) that the discrete Laplace operator satisfies $\Delta=-J$. 
As proved in Lemma \ref{Lem: matrix}, the matrix $J$ is symmetric and positive semi-definite. 
By the spectral theorem for symmetric matrices, 
there exists an orthonormal matrix $Q\in \mathbb{R}^{n \times n}$ such that
\begin{equation*}
J = Q^\mathrm{T}\cdot \operatorname{diag}\{\lambda_1, \ldots, \lambda_n\}\cdot Q,
\end{equation*}
where $\lambda_1, \ldots, \lambda_n \geq 0$ are the eigenvalues of $J$. 
For any real parameter $s \in \mathbb{R}$, the $2s$-th order fractional discrete Laplace operator $\Delta^s$ is defined via spectral decomposition as
\begin{equation}\label{Eq: fractional Laplace}
\Delta^s = -J^s = -Q^\mathrm{T}\cdot \operatorname{diag}\{\lambda_1^s, \ldots, \lambda_n^s\}\cdot Q,
\end{equation}
with the convention that $0^s = 0$ for all $s \in \mathbb{R}$. 
Here $J^s$ denotes the matrix power of $J$. Accordingly, $\Delta^s$ is negative semi-definite and shares the same kernel as $\Delta$. 
In particular, setting $s=1$ recovers the discrete Laplace operator $\Delta=-J$.

\begin{definition}
For any $s \in \mathbb{R}$, the $2s$-th order fractional Calabi flow on a graph with girth at least $6$ is defined by 
\begin{equation}\label{Eq: CF-F}
\begin{cases}
\frac{d}{dt}r_i = \Delta^s (\kappa-\kappa^*)_i, \\
r_i(0) = r_{0,i},
\end{cases}
\end{equation}
where $\kappa^*$ is the prescribed curvature.
\end{definition}

\begin{remark}
The $2s$-th order fractional Calabi flow (\ref{Eq: CF-F}) reduces to the Lin-Lu-Yau Ricci flow introduced by Lin-Liu \cite{LL} when $s=0$, and recovers the modified Calabi flow (\ref{Eq: CF2}) when $s=1$.
\end{remark}

Lemma \ref{Lem: matrix} implies that $\sum_{j=1}^n \frac{\partial \kappa_i}{\partial r_j} = 0$ for all $i \in \{1,...,n\}$. 
As a consequence, the discrete Laplace operator $\Delta$ in (\ref{Eq: Laplace}) admits an equivalent reformulation in terms of adjacent edges:
\begin{equation*}
\Delta f_i
=-\sum_{j=1}^n \frac{\partial \kappa_i}{\partial r_j} f_j
=\sum_{j \sim i} \left( -\frac{\partial \kappa_i}{\partial r_j} \right) (f_j - f_i).
\end{equation*}
This alternative expression motivates the definition of a discrete $p$-Laplacian and the corresponding $p$-th Calabi flow.

\begin{definition}\label{Def: CF-P}
For any $p>1$, the $p$-th Calabi flow on a graph with girth at least $6$ is defined by
\begin{equation}\label{Eq: CF-P}
\begin{cases}
\frac{d}{dt}r_i= \Delta_{p} (\kappa-\kappa^*)_i, \\
r_i(0) = r_{0,i},
\end{cases}
\end{equation}
where $\Delta_{p}$ denotes the discrete $p$-Laplacian, acting on any function $f: E \to \mathbb{R}$ as
\begin{equation}\label{Eq: P-Laplace}
\Delta_{p} f_i = \sum_{j \sim i} \left( -\frac{\partial \kappa_i}{\partial r_j} \right) |f_j - f_i|^{p-2} (f_j - f_i).
\end{equation}
\end{definition}

\begin{remark}
When $p = 2$, the discrete $p$-Laplacian $\Delta_p$ defined in (\ref{Eq: P-Laplace}) reduces to the standard discrete Laplace operator $\Delta$ in (\ref{Eq: Laplace}), and the $p$-th Calabi flow (\ref{Eq: CF-P}) coincides exactly with the modified Calabi flow (\ref{Eq: CF2}).
\end{remark}

The main result of this paper is stated below, establishing longtime existence and exponential (or plain) convergence for solutions to the modified Calabi flow (\ref{Eq: CF2}), the fractional Calabi flow (\ref{Eq: CF-F}), and the $p$-th Calabi flow (\ref{Eq: CF-P}).

\begin{theorem}\label{Thm: main}
Let $G$ be a finite connected graph with girth at least $6$, and let $\kappa^*$ be a prescribed curvature satisfying the topological condition (\ref{Eq: GB}). Then the following statements are equivalent:
\begin{description}
\item[(i)] There exists a weight $r^*$ such that $\kappa(r^*) = \kappa^*$.

\item[(ii)] The solution $r(t)$ to the modified Calabi flow (\ref{Eq: CF2}) exists for all times and converges exponentially fast to $r^*$.

\item[(iii)]  For any $s\in \mathbb{R}$, the solution $r(t)$ to the $2s$-th order fractional Calabi flow (\ref{Eq: CF-F}) exists for all time and converges exponentially fast to $r^*$.

\item[(iv)]
For any $p>1$, the solution $r(t)$ to the $p$-th Calabi flow (\ref{Eq: CF-P}) exists for all time and converges to $r^*$.
\end{description}
\end{theorem}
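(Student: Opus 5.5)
The plan is to treat all three flows as gradient-like systems for a single convex potential, using Lemma \ref{Lem: matrix}: $J$ is symmetric, positive semi-definite, and has zero row sums. Symmetry of $J$ means the $1$-form $\sum_i(\kappa_i-\kappa^*_i)\,dr_i$ is closed on $\mathbb{R}^n$. So
\begin{equation*}
F(r)=\int_{r_0}^{r}\sum_{i=1}^n(\kappa_i-\kappa^*_i)\,dr_i
\end{equation*}
is well defined, with $\nabla F=\kappa-\kappa^*$ and $\operatorname{Hess}F=J\ge 0$.

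Zero row sums give $J\mathbf{1}=0$. Hence $\kappa(r+c\mathbf{1})=\kappa(r)$, and $\sum_i\kappa_i$ is independent of $r$; I expect this to be exactly the content of (\ref{Eq: GB}), namely $\sum_i\kappa^*_i=\sum_i\kappa_i$. The first real step is to show that for a connected graph of girth at least $6$ the kernel of $J$ is exactly $\mathrm{span}\{\mathbf{1}\}$. For this I would use the explicit formula (\ref{Eq: curvature}): the off-diagonal entries $-\partial\kappa_i/\partial r_j$ are positive for adjacent edges, so $J$ is a weighted Laplacian on the line graph, which is connected. Consequently $F$ is strictly convex on every hyperplane $H_c=\{\sum_i r_i=c\}$. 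I expect this kernel computation, together with the resulting properness of $F$ on $H_c$, to be the main obstacle.

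Each flow preserves $\sum_i r_i$. For (\ref{Eq: CF2}) and (\ref{Eq: CF-F}) this holds because $\mathbf{1}^{\mathrm T}J^s=0$. For (\ref{Eq: CF-P}) it holds because the coefficients $-\partial\kappa_i/\partial r_j$ are symmetric and $|f_j-f_i|^{p-2}(f_j-f_i)$ is antisymmetric in $i,j$. So every trajectory stays in $H_c$ with $c=\sum_i r_{0,i}$.

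Along each flow $F$ is a Lyapunov function:
\begin{equation*}
\frac{d}{dt}F=-(\kappa-\kappa^*)^{\mathrm T}J^s(\kappa-\kappa^*)\le 0,
\end{equation*}
and for the $p$-flow,
\begin{equation*}
\frac{d}{dt}F=-\tfrac12\sum_{i\sim j}\Bigl(-\tfrac{\partial\kappa_i}{\partial r_j}\Bigr)|f_j-f_i|^p\le 0,\qquad f=\kappa-\kappa^*,
\end{equation*}
using symmetric summation. In every case $dF/dt=0$ forces $\kappa-\kappa^*\in\mathrm{span}\{\mathbf{1}\}$: for $J^s$ because it has the same kernel as $J$, and for the $p$-flow because $f$ must be constant on the connected line graph. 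Condition (\ref{Eq: GB}) then forces $\kappa=\kappa^*$.

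For (i)$\Rightarrow$(ii),(iii),(iv), I would translate $r^*$ by a multiple of $\mathbf{1}$ so that $r^*\in H_c$; this does not change $\kappa(r^*)$. Then $r^*$ is the unique critical point of the strictly convex function $F|_{H_c}$. A strictly convex function with a critical point is proper, so $F|_{H_c}$ has compact sublevel sets. The trajectory therefore stays in a compact set, which gives existence for all time. LaSalle's principle, with the equilibrium characterization above, gives $r(t)\to r^*$.

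For exponential convergence in (\ref{Eq: CF2}) and (\ref{Eq: CF-F}), I would linearize at $r^*$. The linearization is $-J(r^*)^sJ(r^*)$ restricted to $H_c$, which is $-Q^{\mathrm T}\operatorname{diag}\{\lambda_i^{s+1}\}Q$ with all $\lambda_i>0$ on $\mathbf{1}^\perp$, so it is negative definite there. Lyapunov stability then gives an exponential rate. The $p$-Laplacian is degenerate (for $p>2$) or non-Lipschitz (for $p<2$) at equilibrium, so for (iv) only plain convergence is claimed.

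For the converses, suppose a solution $r(t)$ converges to some $r_\infty$. Then $\dot r(t)\to 0$ along a sequence of times, so $r_\infty$ is an equilibrium. By the equilibrium characterization above, $\kappa(r_\infty)=\kappa^*$, and $r^*=r_\infty$ gives (i).
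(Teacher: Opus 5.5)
Your proposal is correct and follows essentially the same route as the paper: the convex potential $f$ with Hessian $J$, invariance of $\sum_i r_i$, properness on the invariant hyperplane, monotonicity of $f$ along each flow, identification of equilibria via $\ker J=\mathrm{span}\{\mathbf{1}\}$ together with (\ref{Eq: GB}), and the same converse argument. The differences are minor. You use LaSalle's principle where the paper uses a mean-value subsequence plus a contradiction argument. You get the exponential rate for (\ref{Eq: CF2}) by linearization rather than the paper's inequality $\frac{d}{dt}\widetilde{\mathcal{C}}\le-\lambda_0\widetilde{\mathcal{C}}$. You also make explicit two points the paper leaves implicit: translating $r^*$ into the invariant hyperplane, and using connectedness of the line graph to pin down $\ker J$.
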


\begin{remark}
When $s=0$, the convergence result for the $2s$-th order fractional Calabi flow in Theorem \ref{Thm: main} reduces to the corresponding result for the Lin-Lu-Yau Ricci flow; see Theorem \ref{Thm: LL} for details. 
In contrast to the modified Calabi flow (the case $p=2$), exponential convergence cannot be guaranteed for solutions to the $p$-th Calabi flow when $p \neq 2$.
\end{remark}

\subsection{Organization of the paper}

This paper is structured as follows. 
In Section \ref{Sec: PR}, we first recall the original definition of the Lin-Lu-Yau curvature. 
We then introduce the specialized Lin-Lu-Yau curvature adopted in this work, which is defined on graphs of girth at least $6$. 
Next, we review the seminal work of Lin-Liu on the Lin-Lu-Yau Ricci flow. 
Finally, we present several results concerning the existence of weight functions realizing constant prescribed curvatures. 
In Section \ref{Sec: proof}, we prove Theorem \ref{Thm: main}.

\section{Previous results}\label{Sec: PR}

\subsection{Lin-Lu-Yau curvature}\label{Subsec: LLY curvature}

Lin-Lu-Yau curvature is a modified version of the Ollivier–Ricci curvature \cite{Ollivier}, 
which is devised to measure the discrepancy between the Wasserstein distance and the intrinsic graph distance. In this section, we collect the necessary definitions and preliminary results. 
For more thorough treatments of this topic, we refer the readers to \cite{MW,Ollivier,LLY}.

Let $d(x,y)$ be the standard graph distance between vertices $x,y \in V$. 
For any two probability measures $\mu$ and $\nu$ on the vertex set $V$, 
the Wasserstein distance $W(\mu,\nu)$ is defined by
\begin{equation*}
W(\mu,\nu):=\inf_{\rho}\sum_{x,y\in V}\rho(x,y)d(x,y)
\end{equation*}
where the infimum ranges over all coupling functions $\rho:V \times V\to[0,1]$ satisfying the marginal constraints $\sum_{y\in V}\rho(x,y)=\mu(x)$ and $\sum_{x\in V}\rho(x,y)=\nu(y)$ for all $x,y\in V$.
For any $\alpha \in [0,1]$ and any vertex $x$, the probability measure $m_x^\alpha$ is defined as
\begin{equation*}
m_x^\alpha(y) = 
\begin{cases} 
\alpha & \text{if } y = x, \\
(1-\alpha)\dfrac{\omega_{xy}}{m(x)} & \text{if } y\sim x, \\
0 & \text{otherwise}.
\end{cases}
\end{equation*}
where $m(x) := \sum_{y \sim x} \omega_{xy}$ denotes the total weight of all edges incident to $x$. 
For any pair of distinct vertices $x,y\in V$, Lin-Lu-Yau curvature is given by
\begin{equation*}
\kappa(x, y) := \lim_{\alpha \to 1^-} \frac{1}{1 - \alpha} \left( 1 - \frac{W(m_x^\alpha, m_y^\alpha)}{d(x, y)} \right),
\end{equation*}
where $W(m_x^\alpha, m_y^\alpha)$ stands for the Wasserstein distance between $m_x^\alpha$ and $m_y^\alpha$.
Furthermore, M\"{u}nch-Wojciechowski \cite{MW} derived a limit-free characterization of the Ollivier–Ricci curvature. 
For any function $f:V \rightarrow \mathbb{R}$, we define the edgewise gradient
\begin{equation*}
\nabla_{xy} f := \frac{f(x) - f(y)}{d(x,y)}
\end{equation*}
for $x \neq y \in V$, and the corresponding Lipschitz norm
\begin{equation*}
\|\nabla f\|_\infty : = \sup_{x \sim y} |\nabla_{xy} f| \in [0,\infty].
\end{equation*}
For any constant $K \ge 0$, let $\text{Lip}(K) = \{ f \mid \|\nabla f\|_\infty \le K \}$ denote the space of $K$-Lipschitz functions on the graph. 
Under this notation, Lin-Lu-Yau curvature admits the equivalent variational formulation
\begin{equation}\label{Eq: curvature 2}
\kappa(x,y) = \inf_{\substack{f \in \text{Lip}(1) \\ \nabla_{yx} f = 1}} \nabla_{xy} \Delta f.
\end{equation}
where the graph Laplacian $\Delta$ is defined by
\begin{equation*}
\Delta f(x) = \frac{1}{m(x)} \sum_{z \sim x} \omega_{xz} (f(z) - f(x)), \quad \forall x \in V.
\end{equation*}

\subsection{Special Lin-Lu-Yau curvature}

In this work, we restrict our attention to finite connected graphs with girth at least $6$, 
which contain no cycles of length $3$, $4$, or $5$. 
Under this assumption, for an edge $e$ with endpoints $x$ and $y$, Lin-Lu-Yau curvature simplifies to
\begin{equation}\label{Eq: curvature}
\kappa_e = 2\omega_e \left( \frac{1}{m(x)} + \frac{1}{m(y)} \right) - 2.
\end{equation}

\begin{remark}
Formula \eqref{Eq: curvature} can be derived from \cite[Example 2.3]{MW} by substituting the function $f$ defined in \eqref{Eq: curvature 2} as
\begin{equation*}
f(z) =
\begin{cases}
0 & \text{if } z \sim x \text{ and } z \neq y, \\
1 & \text{if } z = x, \\
2 & \text{if } z = y, \\
3 & \text{if } z \sim y \text{ and } z \neq x.
\end{cases}
\end{equation*}
\end{remark}

Using \eqref{Eq: curvature} and elementary combinatorial arguments, Lin-Liu \cite{LL} gave the following identity
\begin{equation}\label{Eq: GB}
\sum_{e \in E} \kappa_e = 2\left(|V|-|E|\right).
\end{equation}
In fact,
\begin{equation*}
\begin{aligned}
\sum_{e \in E} \kappa_e
&=2 \sum_{e \in E} \omega_e \left( \frac{1}{m(x)} + \frac{1}{m(y)} \right) - 2|E|\\
&=2\sum_{z \in V} \sum_{e_z \in N(z)} \frac{\omega_{e_z}}{m(z)}- 2|E|\\
&=2\sum_{z \in V} 1-2|E|\\
&=2\left(|V|-|E|\right),
\end{aligned}
\end{equation*}
where $N(z)$ denotes the set of edges incident to the vertex $z$.

Furthermore, Lin-Liu \cite{LL} established the following rigidity result for the weight $\omega$ corresponding to the curvature $\kappa$.

\begin{lemma}[\cite{LL}, Proposition 3.2]
\label{Lem: rigidity}
Let $G = (V,E,\omega)$ be a graph with girth at least $6$. Then the weight function $\omega$ is uniquely determined by the curvature $\kappa$ up to a global scalar multiple.
\end{lemma}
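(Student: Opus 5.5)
The plan is to recover $\omega$ from $\kappa$ vertex by vertex along a spanning tree. Connectivity of $G$ is assumed, as in Theorem \ref{Thm: main}. Suppose $\omega$ and $\tilde\omega$ are two weights with $\kappa(\omega)=\kappa(\tilde\omega)$. Since $\kappa$ in \eqref{Eq: curvature} is homogeneous of degree $0$ in $\omega$, we may normalize so that $\omega_{e_0}=\tilde\omega_{e_0}$ on some fixed edge $e_0$. The goal is then to show $\omega=\tilde\omega$.

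The key quantities are the ratios $a_{xy}:=\omega_{xy}/m(x)$, defined for each vertex $x$ and each neighbor $y$ of $x$. They satisfy $\sum_{y\sim x}a_{xy}=1$ at every vertex. By \eqref{Eq: curvature}, $\kappa_{xy}=2(a_{xy}+a_{yx})-2$, so the curvature gives exactly $a_{xy}+a_{yx}=1+\kappa_{xy}/2=:c_{xy}$ on every edge. Moreover, $a_{xy}/a_{yx}=m(y)/m(x)$.

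I would first try to show that the ratios $a$ are determined by $\kappa$. Introduce $u(x)=\log m(x)$. Then $a_{xy}=\omega_{xy}e^{-u(x)}$ and $a_{yx}=\omega_{xy}e^{-u(y)}$, so $a_{xy}=c_{xy}\,e^{-u(x)}/(e^{-u(x)}+e^{-u(y)})$. The vertex condition $\sum_{y\sim x}a_{xy}=1$ becomes
\begin{equation*}
\sum_{y\sim x} c_{xy}\,\frac{e^{u(y)}}{e^{u(x)}+e^{u(y)}}=1\quad\text{for all }x\in V,
\end{equation*}
which is a system of equations in $u$ alone.

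Uniqueness of $u$ up to an additive constant should follow from a maximum-principle argument. Take two solutions $u$ and $\tilde u$, and pick a vertex $x$ where $\tilde u-u$ attains its maximum. Each term $e^{u(y)}/(e^{u(x)}+e^{u(y)})=1/(1+e^{u(x)-u(y)})$ is increasing in $u(y)-u(x)$. At the maximum vertex, $\tilde u(y)-\tilde u(x)\le u(y)-u(x)$ for every neighbor $y$. Since $c_{xy}\ge0$ (because $a_{xy}+a_{yx}>0$), comparing the two vertex equations forces equality at every neighbor with $c_{xy}>0$. In fact $c_{xy}>0$ on every edge, since the $a$'s are positive. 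Hence $\tilde u-u$ is also maximal at each neighbor, and by connectivity it is constant.

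It follows that $m$ is determined up to a positive scalar. Then $\omega_{xy}=c_{xy}/(1/m(x)+1/m(y))$ is determined up to the same scalar, which proves the claim.

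The main obstacle I expect is making the maximum-principle step rigorous, namely the strict monotonicity and the fact that $c_{xy}>0$. Both seem straightforward here. Girth is used only through \eqref{Eq: curvature}, so nothing more is needed.
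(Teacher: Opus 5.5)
Your proof is correct. The paper itself gives no proof of this lemma: it quotes it from Lin--Liu \cite{LL} and later combines it with Lemma~\ref{Lem: matrix}.

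\textbf{Your route.} It is a vertex maximum principle. Passing to $u=\log m$ and $c_{xy}=1+\kappa_{xy}/2=a_{xy}+a_{yx}>0$ turns the problem into the monotone system $\sum_{y\sim x}c_{xy}\,(1+e^{u(x)-u(y)})^{-1}=1$. Strict monotonicity, positivity of $c_{xy}$ and connectivity then force $\tilde u-u$ to be constant. The formula $\omega_{xy}=c_{xy}/(m(x)^{-1}+m(y)^{-1})$ gives $\tilde\omega=\lambda\omega$ directly, so your preliminary normalization on $e_0$ is superfluous.

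\textbf{The route suggested by the paper's own tools.} It is variational. Since $J$ is symmetric, $\kappa=\nabla F$ for a convex potential $F$ on $\mathbb{R}^n$. For distinct $r,r'\in P$,
\[
\langle\kappa(r)-\kappa(r'),\,r-r'\rangle=\int_0^1 (r-r')^{\mathrm T}J\bigl(r'+t(r-r')\bigr)(r-r')\,dt>0,
\]
because $r-r'\perp\mathbf 1$ and $\ker J=\mathbb{R}\mathbf 1$. This derives injectivity of $\kappa|_P$, which is all the proofs of Theorems~\ref{Thm: main1}--\ref{Thm: main3} actually use from this lemma, from Lemma~\ref{Lem: matrix} alone. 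It does, however, depend on the Jacobian computation and on $\ker J$ being one-dimensional, which itself requires connectivity.

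\textbf{Comparison.} Your argument needs no differentiation and shows exactly where positivity and connectivity enter. The variational argument fits the gradient-flow framework the paper is built on.

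\textbf{A shorter variant of your idea.} It works directly on edges. Take an edge $e=xy$ maximizing $\rho_e=\tilde\omega_e/\omega_e$. Then $\tilde m(x)\le\rho_e m(x)$ and $\tilde m(y)\le\rho_e m(y)$, hence $\tilde\kappa_e\ge\kappa_e$. Equality of the curvatures forces equality in both bounds, so $\rho=\rho_e$ on every edge meeting $x$ or $y$. Connectivity of the line graph then finishes the proof.
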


\subsection{Lin-Lu-Yau Ricci flow}\label{Subsec: LL work}

To find the weight $\omega^*$ that realizes the prescribed curvature $\kappa^*$, 
Lin-Liu \cite{LL} introduced the following Lin-Lu-Yau Ricci flow with prescribed curvature:
\begin{equation}\label{Eq: RF}
\begin{cases}
\frac{d}{dt} \omega_i=(\kappa_i-\kappa^*)\omega_i, \\
\omega_i(0) = \omega_{0,i},
\end{cases}
\end{equation}
where $\omega(0)$ is a positive initial weight. 
They established the longtime existence and convergence of the solution to flow \eqref{Eq: RF}.

\begin{theorem}[\cite{LL}, Theorem 1.1]\label{Thm: LL}
On a graph with girth at least 6, the Lin-Lu-Yau Ricci flow (\ref{Eq: RF}) with a positive initial value converges exponentially to the weight of the prescribed curvature if and only if there exists $\omega^*$ such that $\kappa(\omega^*) =\kappa^*$.
\end{theorem}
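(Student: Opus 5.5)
The plan is to pass to logarithmic coordinates $r_i=\ln\omega_i$, so that (\ref{Eq: RF}) becomes the autonomous system $\frac{d}{dt}r_i=\kappa_i(r)-\kappa^*_i$. I would then exhibit this system as a gradient flow of a convex potential on a hyperplane.

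First I compute the Jacobian $J=(\partial\kappa_i/\partial r_j)$ directly from (\ref{Eq: curvature}). For $e=xy$ one gets
\begin{equation*}
\frac{\partial\kappa_e}{\partial r_e}=2\omega_e\Big(\frac{1}{m(x)}+\frac{1}{m(y)}\Big)-2\omega_e^2\Big(\frac{1}{m(x)^2}+\frac{1}{m(y)^2}\Big),
\end{equation*}
and $\partial\kappa_e/\partial r_f=-2\omega_e\omega_f/m(z)^2$ when $f\neq e$ shares the vertex $z$ with $e$. Girth at least $6$ guarantees that two distinct edges share at most one vertex, so this formula is unambiguous. The derivative is $0$ for non-adjacent edges.

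This computation yields three facts about $J$:
\begin{itemize}
\item $J$ is symmetric.
\item Its row sums vanish, since $\kappa$ is invariant under $\omega\mapsto c\,\omega$.
\item $J$ is a weighted Laplacian-type matrix on the line graph with nonpositive off-diagonal entries, hence semi-definite with kernel containing $\mathbf 1$.
\end{itemize}
By Lemma \ref{Lem: rigidity}, or by connectedness of the line graph, the kernel is exactly $\mathbb{R}\mathbf 1$. Symmetry makes the $1$-form $\sum_i(\kappa_i-\kappa_i^*)\,dr_i$ closed, so $F(r)=\int_{r_0}^r\sum_i(\kappa_i-\kappa_i^*)\,dr_i$ is well defined. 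By (\ref{Eq: GB}) and the assumption $\sum\kappa^*_e=2(|V|-|E|)$, we have $\frac{d}{dt}\sum_i r_i=0$, so the flow stays in the hyperplane $H=\{\sum_i r_i=\sum_i r_{0,i}\}$. On $H$ the Hessian of $F$ is definite.

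For the direction (flow converges $\Rightarrow$ $\omega^*$ exists), I argue as follows. If $r(t)\to r^*$, then $\frac{d}{dt}r$ has a limit, and that limit must be $0$. Hence $\kappa(r^*)=\kappa^*$.

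For the converse, assume $\kappa(r^*)=\kappa^*$; after rescaling I may take $r^*\in H$, and it is the unique critical point of $F|_H$. Strict convexity of $F$ on $H$ gives properness of $F|_H$, since a strictly convex function with a critical point grows at least linearly at infinity. This yields a priori bounds, hence longtime existence of the ODE and confinement of $r(t)$ to a compact subset of $H$. On that compact set the nonzero eigenvalues of $J$ are bounded below by some $\lambda>0$. The Lyapunov function $\|\kappa-\kappa^*\|^2$ then satisfies $\frac{d}{dt}\|\kappa-\kappa^*\|^2=\mp2(\kappa-\kappa^*)^{\mathrm T}J(\kappa-\kappa^*)$. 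Since $\kappa-\kappa^*\perp\mathbf 1$, this is bounded by $\mp2\lambda\|\kappa-\kappa^*\|^2$. Exponential decay of $\kappa-\kappa^*$, combined with the local diffeomorphism property of $\kappa|_H$ near $r^*$, gives exponential convergence $r(t)\to r^*$.

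The main obstacle is the sign. Computed literally from (\ref{Eq: curvature}), $J$ is positive semi-definite. Then $\dot r=\kappa-\kappa^*$ would be gradient \emph{ascent} of the convex $F$, and $r^*$ would be repelling. So the first thing I would do is fix the sign convention of \cite{LL}, namely whether the flow is $\dot\omega=-(\kappa-\kappa^*)\omega$, or whether the convention for $\kappa$ is opposite. The target is that the linearization at $r^*$ is $-J|_H$, i.e.\ that the flow is the \emph{negative} gradient flow of $F$. Once the orientation is right, the convexity, properness and spectral-gap steps above go through.
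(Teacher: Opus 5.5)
Your argument is correct and follows the paper's route. The paper only quotes this theorem from \cite{LL}, but it identifies (\ref{Eq: RF}) with the case $s=0$ of (\ref{Eq: CF-F}), and its proofs of Theorems \ref{Thm: main1} and \ref{Thm: main2} use exactly your steps: the potential $f$ of (\ref{Eq: function}), invariance of $\sum_i r_i$, properness via Lemma \ref{Lem: proper}, a uniform spectral gap of $J$ on the compact set containing the orbit, decay of $\|\kappa-\kappa^*\|^2$, and local inversion of $\kappa|_P$.

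Your sign worry is justified, and that same identification settles it. Since $J^0$ is the orthogonal projection onto $\mathbf{1}^\perp$ and $\kappa-\kappa^*\perp\mathbf{1}$, the $s=0$ flow is $\dot r=-(\kappa-\kappa^*)$, i.e.\ $\dot\omega_i=(\kappa_i^*-\kappa_i)\omega_i$. With the sign as printed in (\ref{Eq: RF}), one instead has $\frac{d}{dt}f(r(t))=\|\kappa-\kappa^*\|^2\ge 0$ while $f>0$ on $P\setminus\{r^*\}$, so no nonstationary orbit converges, and the statement as literally printed is false.
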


By virtue of \eqref{Eq: GB}, the average curvature for graphs with girth at least $6$ is defined as
\begin{equation}\label{Eq: AC}
\overline{\kappa}=\frac{\sum_{e \in E} \kappa_e}{|E|} = 2\left( \frac{|V|}{|E|} - 1 \right).
\end{equation}

For constant curvature on graphs with girth at least $6$, Lin-Liu \cite{LL} derived the following necessary and sufficient condition for the existence of a constant curvature weight function.

\begin{theorem}[\cite{LL}, Theorem 3.1]\label{Thm: existence}
There exist positive weights on $E$ satisfying $\kappa_e \equiv \overline{\kappa}$ for any $e \in E$ if and only if
\begin{equation}\label{Eq: Key}
\max_{\emptyset \neq \Omega \subsetneq V} \frac{|E(\Omega)|}{|\Omega|} < \frac{|E|}{|V|},
\end{equation}
where $E(\Omega)$ denote the set of edges in the subgraph induced by $\Omega\subsetneq V$.
\end{theorem}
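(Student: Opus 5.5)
The plan is to recast the constant-curvature equation as the critical-point equation of a convex function of vertex potentials. Sufficiency will then follow from a coercivity argument, and necessity from a direct counting argument.

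\textbf{Step 1 (reduction to vertex potentials).} By \eqref{Eq: curvature} and \eqref{Eq: AC}, $\kappa_e\equiv\overline{\kappa}$ means $\omega_{xy}\bigl(\frac1{m(x)}+\frac1{m(y)}\bigr)=c:=\frac{|V|}{|E|}$ for every edge $xy$. Write $m(x)=e^{u_x}$. Then the edge equation forces
\begin{equation*}
\omega_{xy}=\frac{c}{e^{-u_x}+e^{-u_y}},
\end{equation*}
and the consistency requirement $m(x)=\sum_{y\sim x}\omega_{xy}$ becomes
\begin{equation*}
\sum_{y\sim x}\sigma(u_y-u_x)=\frac{|E|}{|V|}\quad\text{for all } x\in V,
\qquad \sigma(t)=\frac1{1+e^{-t}}.
\end{equation*}
Conversely, any $u\in\mathbb{R}^V$ solving this system yields positive weights, through the displayed formula, with $m(x)=e^{u_x}$ and constant curvature $\overline{\kappa}$. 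So it suffices to solve the system in $u$.

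\textbf{Step 2 (variational formulation).} Let $F(u)=\sum_{xy\in E}\ln(e^{u_x}+e^{u_y})$. Then $\partial_{u_x}F=\sum_{y\sim x}\sigma(u_x-u_y)=\deg x-\sum_{y\sim x}\sigma(u_y-u_x)$. Hence the system in Step 1 is exactly $\nabla\Phi=0$ for
\begin{equation*}
\Phi(u)=F(u)-\sum_{x}\Bigl(\deg x-\frac{|E|}{|V|}\Bigr)u_x .
\end{equation*}
The function $\Phi$ is convex, since it is a log-sum-exp plus a linear term. It is also invariant under $u\mapsto u+t\mathbf 1$: $F$ increases by $|E|t$, and the linear term increases by $(2|E|-|E|)t=|E|t$. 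Thus a solution exists as soon as $\Phi$ is coercive on the quotient $\mathbb{R}^V/\mathbb{R}\mathbf 1$, because then a minimizer exists.

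\textbf{Step 3 (sufficiency via the recession function).} This is the main step. The asymptotic slope of $\Phi$ along a direction $v$ is
\begin{equation*}
\Phi_\infty(v)=\sum_{xy\in E}\max(v_x,v_y)-\sum_x\Bigl(\deg x-\frac{|E|}{|V|}\Bigr)v_x .
\end{equation*}
Coercivity modulo constants follows if $\Phi_\infty(v)>0$ for every nonconstant $v$, together with compactness of the unit sphere in the quotient. To check this, use a layer-cake decomposition. Both terms are invariant under adding constants, so assume $\min v=0$ and write $v=\int_0^\infty \mathbf 1_{\{v\ge s\}}\,ds$. Since $\mathbf 1_{\{\max(v_x,v_y)\ge s\}}=\max(\mathbf 1_{\{v_x\ge s\}},\mathbf 1_{\{v_y\ge s\}})$, both terms are additive over level sets. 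Hence $\Phi_\infty(v)=\int_0^{\max v}\Phi_\infty(\mathbf 1_{\Omega_s})\,ds$ with $\emptyset\ne\Omega_s\subsetneq V$ for $0<s\le \max v$.

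For an indicator $\mathbf 1_\Omega$, the first term counts the edges meeting $\Omega$, which is $|E(\Omega)|+|\partial\Omega|$. The second term is $\sum_{x\in\Omega}\deg x-\frac{|E||\Omega|}{|V|}=2|E(\Omega)|+|\partial\Omega|-\frac{|E||\Omega|}{|V|}$. Therefore
\begin{equation*}
\Phi_\infty(\mathbf 1_\Omega)=\frac{|E||\Omega|}{|V|}-|E(\Omega)|,
\end{equation*}
which is positive for every proper nonempty $\Omega$ exactly under \eqref{Eq: Key}. This gives coercivity, hence a minimizer of $\Phi$, hence the desired weights. The delicate points are making the level-set reduction rigorous and passing from a positive recession function to coercivity on the quotient; both are standard for finite-dimensional convex functions.

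\textbf{Step 4 (necessity).} Suppose such weights exist, and fix a nonempty proper $\Omega\subsetneq V$. Sum the system of Step 1 over $x\in\Omega$; the total is $|\Omega|\frac{|E|}{|V|}$. Each edge inside $\Omega$ contributes $\sigma(a)+\sigma(-a)=1$, and each edge of $\partial\Omega$ contributes a strictly positive amount. Because $G$ is connected and $\Omega$ is proper, $\partial\Omega\neq\emptyset$, so $|\Omega|\frac{|E|}{|V|}>|E(\Omega)|$. Taking the maximum over $\Omega$ gives \eqref{Eq: Key}.
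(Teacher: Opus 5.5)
The paper does not prove this statement. It is quoted from Lin--Liu \cite{LL} and used only as a black box, to supply hypothesis (i) of Theorem~\ref{Thm: main} in the constant-curvature corollary, so there is no in-paper argument to compare with.

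Judged on its own, your proof is correct and complete.
\begin{itemize}
\item The substitution $u_x=\ln m(x)$ turns the constant-curvature condition into an exactly equivalent vertex system. Positivity of $m(x)$ uses that a connected graph with $|V|\ge 2$ has no isolated vertices.
\item $\nabla\Phi=0$ is precisely that system, and $\Phi$ is convex and invariant under $u\mapsto u+t\mathbf 1$.
\item Your value $\Phi_\infty(\mathbf 1_\Omega)=\frac{|E||\Omega|}{|V|}-|E(\Omega)|$ is right, so the level-set formula turns \eqref{Eq: Key} into $\Phi_\infty>0$ on nonconstant vectors.
\item In the necessity step, connectedness is exactly where the hypothesis enters, and it is genuinely needed. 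For two disjoint edges, constant-curvature weights exist (any weights give $\kappa_e=2=\overline{\kappa}$), yet \eqref{Eq: Key} fails, with equality when $\Omega$ is one component.
\end{itemize}

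The two points you flag as delicate are elementary here. The layer-cake identity is just a finite sum over the distinct values of $v$. For coercivity, $\max(a,b)\le\ln(e^a+e^b)\le\max(a,b)+\ln 2$ gives $\Phi_\infty\le\Phi\le\Phi_\infty+|E|\ln 2$ pointwise. Hence on $H=\{u:\sum_x u_x=0\}$ you get $\Phi(u)\ge\delta\|u\|$ with $\delta=\min\{\Phi_\infty(v): v\in H,\ \|v\|=1\}>0$. This holds because $\Phi_\infty$ is continuous, positively homogeneous, and positive on nonzero (hence nonconstant) vectors of $H$, so no general recession-cone theorem is needed.

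Your route also gives uniqueness for free. The Hessian of $F$ is the graph Laplacian with positive edge weights $\sigma'(u_x-u_y)$, so $\Phi$ is strictly convex on $H$. Thus $u$ is unique modulo constants and $\omega$ is unique up to scaling, consistent with Lemma~\ref{Lem: rigidity}.

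Relative to the paper's own machinery: its edge potential $f$ is, up to a linear term, $2\sum_x\ln m(x)-2\sum_e r_e$. The paper gets properness of $f|_P$ from Lemma~\ref{Lem: proper} only after assuming a zero $r^*$ exists. Your recession argument transplanted to $f$ (with $\kappa^*\equiv\overline{\kappa}$, based at any point) gives $f_\infty(\mathbf 1_F)=2|V(F)|-\frac{2|V|}{|E|}|F|$ for an edge set $F$. This is positive for $\emptyset\ne F\subsetneq E$ by \eqref{Eq: Key} together with $|F|\le|E(V(F))|$. So the same idea would prove existence directly inside the paper's framework. Your vertex formulation is the cleaner of the two, since its level sets are exactly the sets $\Omega$ appearing in \eqref{Eq: Key}.
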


\subsection{Calabi flows with constant curvatures}

Substituting $\overline{\kappa}$ for $\kappa^*$ in the modified Calabi flow \eqref{Eq: CF2}, the $2s$-th order fractional Calabi flow \eqref{Eq: CF-F}, and the $p$-th Calabi flow \eqref{Eq: CF-P} yields the following three classes of Calabi flows with the constant curvature $\overline{\kappa}$:
\begin{equation}\label{Eq: F4}
\frac{d}{dt}r_i
=\Delta (\kappa- \overline{\kappa})_i
=\Delta \kappa_i-\Delta \overline{\kappa}_i
=\Delta \kappa_i,
\end{equation}
\begin{equation}\label{Eq: F5}
\begin{aligned}
\frac{d}{dt}r_i
=\Delta^s (\kappa-\overline{\kappa})_i
=\Delta^s \kappa_i-\Delta^s\overline{\kappa}_i
=\Delta^s \kappa_i,
\end{aligned}
\end{equation}
and
\begin{equation}\label{Eq: F6}
\frac{d}{dt}r_i
= \Delta_{p} (\kappa-\overline{\kappa})_i.
\end{equation}

Note that Theorem \ref{Thm: main} remains valid if we replace $\kappa^*$ with $\overline{\kappa}$ in statement (i) of Theorem \ref{Thm: main}. 
Combining this with Theorem \ref{Thm: existence}, we obtain the following corollary.
\begin{corollary}
The solutions to these three Calabi flows \eqref{Eq: F4}, \eqref{Eq: F5} and \eqref{Eq: F6} converge to the weight $\omega$ with constant curvature $\overline{\kappa}$ if and only if condition \eqref{Eq: Key} is satisfied.
\end{corollary}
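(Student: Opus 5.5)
The corollary is essentially a specialization of Theorem \ref{Thm: main} with $\kappa^* \equiv \overline{\kappa}$, combined with Theorem \ref{Thm: existence}. The plan has three steps: check that the constant prescribed curvature satisfies the hypothesis of Theorem \ref{Thm: main}, identify the three flows \eqref{Eq: F4}, \eqref{Eq: F5}, \eqref{Eq: F6} with the flows in statements (ii), (iii), (iv), and translate statement (i) into condition \eqref{Eq: Key}.

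\textbf{Step 1: the hypothesis of Theorem \ref{Thm: main} holds.} Since $\kappa^*_e \equiv \overline{\kappa}$ for every edge, \eqref{Eq: AC} gives
\[
\sum_{e\in E}\kappa^*_e = |E|\,\overline{\kappa} = 2(|V|-|E|).
\]
So $\kappa^*$ satisfies the topological condition \eqref{Eq: GB}, and Theorem \ref{Thm: main} applies with $\kappa^*=\overline{\kappa}$.

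\textbf{Step 2: the three flows are the flows of Theorem \ref{Thm: main}.} With $\kappa^*=\overline{\kappa}$, the flows \eqref{Eq: CF2}, \eqref{Eq: CF-F}, \eqref{Eq: CF-P} are literally \eqref{Eq: F4}, \eqref{Eq: F5}, \eqref{Eq: F6}. The only point to verify is the simplifications $\Delta\overline{\kappa}=0$ and $\Delta^s\overline{\kappa}=0$ used in \eqref{Eq: F4} and \eqref{Eq: F5}.
\begin{itemize}
\item For $\Delta$: the row sums $\sum_j \partial\kappa_i/\partial r_j$ vanish by Lemma \ref{Lem: matrix}, so $\Delta\overline{\kappa}=0$. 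This row-sum identity also follows directly from \eqref{Eq: curvature}, since $\kappa$ is invariant under the global scaling $r\mapsto r+c\mathbf 1$.
\item For $\Delta^s$: since $J\mathbf 1=0$, the vector $\mathbf 1$ lies in the zero eigenspace of $J$. By the convention $0^s=0$, $J^s$ also annihilates $\mathbf 1$, so $\Delta^s\overline{\kappa}=0$.
\end{itemize}

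\textbf{Step 3: the equivalence.} Theorem \ref{Thm: main} shows that convergence of each of the three flows (exponential in the first two cases) is equivalent to (i), namely the existence of $r^*$ with $\kappa(r^*)\equiv\overline{\kappa}$. Setting $\omega^*=e^{r^*}$, this is exactly the existence of positive weights with constant curvature $\overline{\kappa}$. By Theorem \ref{Thm: existence}, that existence holds if and only if \eqref{Eq: Key} holds.

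\textbf{Main subtlety.} The word "converge" in the corollary is directional. If the flow converges to some limit $r_\infty$, then $\kappa(r_\infty)=\overline{\kappa}$, and this is precisely what (ii)--(iv) $\Rightarrow$ (i) in Theorem \ref{Thm: main} provides. Since that theorem is available, the only care needed is to match the two notions of convergence. The limit weight is unique only up to scaling by Lemma \ref{Lem: rigidity}; I expect the flow to fix the scale through a conserved quantity such as $\sum_i r_i$ (which holds because $\mathbf 1^{\mathrm T}J=0$), but the corollary does not need this.
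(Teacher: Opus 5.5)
Your proposal is correct and follows the paper's route: the paper obtains the corollary by specializing Theorem \ref{Thm: main} to $\kappa^*\equiv\overline{\kappa}$, which satisfies \eqref{Eq: GB}, and combining statement (i) with Theorem \ref{Thm: existence}. Your extra checks, that $\Delta\overline{\kappa}=\Delta^s\overline{\kappa}=0$ and that the scale of the limit is fixed by the conserved sum $\sum_i r_i$ (Lemma \ref{Lem: invariant}), just make explicit what the paper leaves implicit.
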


\section{Proof of Theorem \ref{Thm: main}}\label{Sec: proof}

\subsection{Some useful lemmas}

The following lemma establishes key properties of the Jacobian matrix $J$. 
For completeness, we supply a detailed proof.
\begin{lemma}[\cite{LL}]\label{Lem: matrix}
The matrix $J=\dfrac{\partial(\kappa_1,\dots,\kappa_n)}{\partial(r_1,\dots,r_n)}$ is symmetric, positive semi-definite, and of rank $n-1$ with kernel $\{ c\mathbf{1}^\mathrm{T}\in \mathbb{R}^n \mid c \in \mathbb{R} \}$.
\end{lemma}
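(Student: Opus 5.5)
We compute $J$ directly from formula (\ref{Eq: curvature}). Write $\omega_i=e^{r_i}$, and for an edge $e_i=xy$ write $m(x)=\sum_{k\in N(x)}\omega_k$. Then
\begin{equation*}
\kappa_i = 2\omega_i\Big(\frac{1}{m(x)}+\frac{1}{m(y)}\Big)-2 .
\end{equation*}
Since $\partial\omega_j/\partial r_j=\omega_j$, we get, for $j\neq i$ with $e_j$ sharing the endpoint $x$ with $e_i$ (girth $\ge 6$ means $e_i,e_j$ share at most one vertex),
\begin{equation*}
\frac{\partial\kappa_i}{\partial r_j}=-\frac{2\omega_i\omega_j}{m(x)^2}.
\end{equation*}
This expression is symmetric in $i,j$. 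It vanishes if $e_i,e_j$ are not adjacent.

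For the diagonal entry, the sum $\sum_j \partial\kappa_i/\partial r_j$ is the derivative of $\kappa_i$ along the scaling direction $r\mapsto r+c\mathbf 1$. Under this scaling every $\omega$ is multiplied by $e^c$, so $\kappa_i$ is invariant. Hence every row of $J$ sums to zero, giving
\begin{equation*}
\frac{\partial\kappa_i}{\partial r_i}=\sum_{j\sim i}\frac{2\omega_i\omega_j}{m(z_{ij})^2},
\end{equation*}
where $z_{ij}$ is the common vertex of $e_i$ and $e_j$. This gives symmetry, and it shows $\mathbf 1\in\ker J$.

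For positive semi-definiteness, we write the quadratic form as a sum of squares. Using the zero row sums and symmetry,
\begin{equation*}
x^{\mathrm T}Jx=\sum_{z\in V}\ \sum_{\{e_i,e_j\}\subset N(z),\,i\neq j}\frac{2\omega_i\omega_j}{m(z)^2}(x_i-x_j)^2\ \ge 0 .
\end{equation*}
Each unordered pair of distinct edges meeting at $z$ appears once. This is the standard Laplacian identity for the weighted line graph $L(G)$, in which edges $e_i,e_j$ are joined with positive weight when they share a vertex.

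For the kernel, suppose $x^{\mathrm T}Jx=0$. Then every term vanishes, so $x_i=x_j$ whenever $e_i,e_j$ share a vertex. Since $G$ is connected, its line graph is connected, so $x$ is constant. Thus $\ker J=\mathrm{span}(\mathbf 1)$ and $\operatorname{rank}J=n-1$; for a PSD matrix the null space of the form equals the kernel.

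The only delicate point is bookkeeping: checking that the girth assumption guarantees two distinct edges share at most one vertex. Girth $\ge 6$ rules out multi-edges and small cycles, so the off-diagonal entries are exactly as computed with no double contributions. We also use that the line graph of a connected graph with at least one edge is connected, which is immediate.
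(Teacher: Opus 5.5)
Your proof is correct and follows the paper's outline: compute the off-diagonal entries $-2\omega_i\omega_j/m(z)^2$, note symmetry and zero row sums, and use the resulting Laplacian structure. The differences are in the last two steps.

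The paper differentiates \eqref{Eq: curvature} directly to get the diagonal entry, then cites the Chow--Luo linear-algebra lemma. You instead obtain the diagonal from the invariance $\kappa(r+c\mathbf{1})=\kappa(r)$, i.e. degree-zero homogeneity in $\omega$, which also explains why $\mathbf{1}\in\ker J$. You then prove positive semi-definiteness and the kernel by hand, identifying $x^{\mathrm T}Jx$ with the Dirichlet form of the weighted line graph $L(G)$.

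The payoff of your route is that connectedness enters explicitly. The lemma as quoted in the paper (positive diagonal, nonpositive off-diagonal entries, zero column sums $\Rightarrow$ one-dimensional kernel) is false without an irreducibility hypothesis. A block-diagonal sum of two copies of $\bigl(\begin{smallmatrix}1&-1\\-1&1\end{smallmatrix}\bigr)$ satisfies all its conditions but has a two-dimensional kernel. Your use of the connectedness of $L(G)$ supplies exactly the missing ingredient.

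Your argument also never needs $\partial\kappa_i/\partial r_i>0$. That inequality fails for the one-edge graph, where $J=0$ and the lemma still holds trivially. The paper's route is shorter only because it outsources the linear algebra.

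A minor remark: two distinct edges of a simple graph can never share both endpoints. So the girth hypothesis is needed only for the formula \eqref{Eq: curvature} itself, not for your bookkeeping of the off-diagonal entries.
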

\proof
Consider two distinct edges $e_i$ and $e_j$ with $j\neq i$. 
Let $x,y$ denote the two endpoints of $e_i$. Differentiating \eqref{Eq: curvature} and recalling $m(x)=\sum_{y\sim x}\omega_{xy}$, we obtain
\begin{equation}\label{Eq: F3}
\frac{\partial \kappa_i}{\partial r_j}
=
\begin{cases}
-\dfrac{2\omega_i \omega_j}{m^2(x)}, & e_j \text{ is incident to } x,\\[6pt]
-\dfrac{2\omega_i \omega_j}{m^2(y)}, & e_j \text{ is incident to } y,\\[4pt]
0, & \text{otherwise}.
\end{cases}
\end{equation}
Furthermore, direct computation yields
\begin{equation*}
\begin{aligned}
\frac{\partial \kappa_i}{\partial r_i}
&= \omega_i\left[2\left(\frac{1}{m(x)}+\frac{1}{m(y)}\right)
-2\omega_i\left(\frac{1}{m^2(x)}+\frac{1}{m^2(y)}\right)\right]\\
&= \sum_{\substack{e_j\in E(x)\\ j\neq i}}\frac{2\omega_i\omega_j}{m^2(x)}
+\sum_{\substack{e_j\in E(y)\\ j\neq i}}\frac{2\omega_i\omega_j}{m^2(y)},
\end{aligned}
\end{equation*}
where $E(u)$ denotes the set of all edges incident to the vertex $u$.
Consequently, the matrix $\bigl(\frac{\partial\kappa_i}{\partial r_j}\bigr)_{n\times n}$ is symmetric, satisfies
\begin{equation*}
\frac{\partial\kappa_i}{\partial r_i}>0,\quad
\frac{\partial\kappa_i}{\partial r_j}\le 0\;\text{ for all }i\neq j,
\end{equation*}
and obeys the condition $\sum_{i=1}^n\frac{\partial\kappa_i}{\partial r_j}=0$ for every $j$.
The desired conclusion then follows from a standard linear algebraic fact (see \cite[Lemma~3.10]{Chow-Luo} for a proof):\\
\noindent\textbf{Lemma:}
Suppose $A = [a_{ij}]_{n \times n}$ is a symmetric matrix.
If $a_{ii} > 0$ and $a_{ij} \leq 0$ for all $i \neq j$ so that $\sum_{i=1}^n a_{ij} = 0$ for all $j$,
then $A$ is semi-positive definite so that its kernel is one-dimensional.

\qed

Lemma \ref{Lem: matrix} implies the following invariance result.
\begin{lemma}\label{Lem: invariant}
Let $\kappa^*$ be a prescribed curvature satisfying \eqref{Eq: GB}. 
Then the sum $\sum_{i=1}^n r_i(t)$ is invariant along the modified Calabi flow \eqref{Eq: CF2}, the $2s$-th order fractional Calabi flow \eqref{Eq: CF-F}, and the $p$-th Calabi flow \eqref{Eq: CF-P}.
\end{lemma}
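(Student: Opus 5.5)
The plan is to differentiate $S(t):=\sum_{i=1}^n r_i(t)$ along each flow and show that the derivative vanishes identically. Write $\mathbf{1}=(1,\dots,1)$. In every case
\begin{equation*}
\frac{d}{dt}S(t)=\sum_{i=1}^n \frac{d}{dt}r_i=\mathbf{1}\cdot \frac{dr}{dt},
\end{equation*}
so it suffices to check that the right-hand side of each flow is orthogonal to $\mathbf{1}$. The key input is Lemma \ref{Lem: matrix}: $J$ is symmetric, with column (equivalently row) sums zero, and its kernel is spanned by $\mathbf{1}$. Note that this orthogonality is a property of the operators themselves; the hypothesis \eqref{Eq: GB} on $\kappa^*$ is not really needed. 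Presumably it is stated only for consistency, since $\sum_i(\kappa_i-\kappa^*_i)=0$ makes $\kappa-\kappa^*$ itself orthogonal to $\mathbf{1}$.

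\emph{Modified Calabi flow.} Here $\frac{dr}{dt}=-J(\kappa-\kappa^*)$. Using the symmetry of $J$,
\begin{equation*}
\mathbf{1}\cdot J(\kappa-\kappa^*)=(J\mathbf{1})\cdot(\kappa-\kappa^*)=0,
\end{equation*}
because $J\mathbf{1}=0$.

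\emph{Fractional flow.} Since $J=Q^{\mathrm T}\operatorname{diag}\{\lambda_i\}Q$ and $\mathbf{1}\in\ker J$, the vector $Q\mathbf{1}$ is supported on the coordinates where $\lambda_i=0$. By the convention $0^s=0$, this gives $J^s\mathbf{1}=Q^{\mathrm T}\operatorname{diag}\{\lambda_i^s\}Q\mathbf{1}=0$. The matrix $J^s$ is again symmetric, so
\begin{equation*}
\mathbf{1}\cdot J^s(\kappa-\kappa^*)=(J^s\mathbf{1})\cdot(\kappa-\kappa^*)=0.
\end{equation*}
This is the one place that needs care: for $s\le 0$ the power is not literally a polynomial in $J$. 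What matters is that the convention $0^s=0$ kills the kernel direction for every real $s$, including $s=0$, where $J^0$ is the orthogonal projection onto $\mathbf{1}^\perp$.

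\emph{$p$-th flow.} Set $f=\kappa-\kappa^*$, $a_{ij}=-\frac{\partial\kappa_i}{\partial r_j}$ for $i\sim j$, and $\phi(u)=|u|^{p-2}u$. By \eqref{Eq: F3}, the weights satisfy $a_{ij}=a_{ji}$, and $\phi$ is odd. Then
\begin{equation*}
\sum_i \Delta_p f_i=\sum_i\sum_{j\sim i} a_{ij}\,\phi(f_j-f_i).
\end{equation*}
Each unordered adjacent pair $\{i,j\}$ contributes $a_{ij}\bigl(\phi(f_j-f_i)+\phi(f_i-f_j)\bigr)=0$. This antisymmetric pairing is the main step; it relies only on the symmetry of $\partial\kappa_i/\partial r_j$ from Lemma \ref{Lem: matrix} and the adjacency relation being symmetric. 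Hence $\frac{d}{dt}S=0$ in all three cases, which proves the lemma.
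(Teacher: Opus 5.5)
Your proposal is correct and follows essentially the same route as the paper: you differentiate $\sum_i r_i$, dispose of the modified and fractional flows using $J\mathbf{1}=0$ (together with the convention $0^s=0$ for $J^s$) and the symmetry of $J$, and handle the $p$-Laplacian by pairing the $(i,j)$ and $(j,i)$ terms via $\partial\kappa_i/\partial r_j=\partial\kappa_j/\partial r_i$. Your remark that hypothesis \eqref{Eq: GB} is not actually needed is accurate, since the paper's argument does not use it either, and your spectral justification of $J^s\mathbf{1}=0$ for every real $s$ spells out a step the paper simply attributes to Lemma \ref{Lem: matrix}.
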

\proof
First, along the modified Calabi flow \eqref{Eq: CF2}, a direct computation gives
\begin{equation*}
\frac{d}{dt}\left( \sum_{i=1}^n r_i \right)
= \sum_{i=1}^n \frac{dr_i}{dt}
= \sum_{i=1}^n \Delta (\kappa - \kappa^*)_i
= -\sum_{i=1}^n \sum_{j=1}^n J (\kappa - \kappa^*)_j
= 0,
\end{equation*}
where the last equality is a direct consequence of Lemma \ref{Lem: matrix}. 
This proves that $\sum_{i=1}^n r_i$ is invariant along the modified Calabi flow \eqref{Eq: CF2}.

Next, along the $2s$-th order fractional Calabi flow \eqref{Eq: CF-F}, we obtain
\begin{equation*}
\frac{d}{dt}\left( \sum_{i=1}^n r_i \right)
= \sum_{i=1}^n \Delta^s (\kappa - \kappa^*)_i
= - \mathbf{1}^\mathrm{T} J^s (\kappa - \kappa^*)
= 0,
\end{equation*}
where the final equality holds by Lemma \ref{Lem: matrix}. 
Thus, $\sum_{i=1}^n r_i$ is invariant along the $2s$-th order fractional Calabi flow \eqref{Eq: CF-F}.

Finally, along the $p$-th Calabi flow \eqref{Eq: CF-P}, we have
\begin{equation*}
\frac{d}{dt}\left( \sum_{i=1}^n r_i \right)
= \sum_{i=1}^n \Delta_{p} (\kappa - \kappa^*)_i.
\end{equation*}
By the definition of the discrete $p$-Laplacian $\Delta_{p}$ in \eqref{Eq: P-Laplace} and the symmetry of the matrix $\bigl(\frac{\partial \kappa_i}{\partial r_j}\bigr)_{n\times n}$, for any function $g: E \to \mathbb{R}$, the identity holds:
\begin{equation}\label{Eq: F1}
\begin{aligned}
\sum_{i=1}^n \Delta_{p} g_i
&=\sum_{i=1}^n \sum_{j \sim i} \left( -\frac{\partial \kappa_i}{\partial r_j} \right) |g_j - g_i|^{p-2} (g_j - g_i)\\
&=\frac{1}{2} \sum_{i=1}^n \sum_{j \sim i} \left( -\frac{\partial \kappa_i}{\partial r_j} \right)|g_j - g_i|^{p-2} (g_j - g_i)\\
&\quad + \frac{1}{2} \sum_{i=1}^n \sum_{j \sim i} \left( -\frac{\partial \kappa_i}{\partial r_j} \right)|g_j - g_i|^{p-2} (g_i - g_j)\\
&=0.
\end{aligned}
\end{equation}
Substituting $g = \kappa - \kappa^*$ into \eqref{Eq: F1} yields $\frac{d}{dt}\left( \sum_{i=1}^n r_i \right) = 0$ along the $p$-th Calabi flow \eqref{Eq: CF-P}. 
Therefore, $\sum_{i=1}^n r_i$ is invariant along this flow.
\qed

Lemma \ref{Lem: invariant} guarantees that all solutions to the modified Calabi flow \eqref{Eq: CF2}, the $2s$-th order fractional Calabi flow \eqref{Eq: CF-F}, and the $p$-th Calabi flow \eqref{Eq: CF-P} remain confined to the affine hyperplane
\begin{equation*}
P = \biggl\{ r \in \mathbb{R}^n \,\bigg|\, \sum_{i=1}^n r_i = \sum_{i=1}^n r_{i,0} \biggr\}.
\end{equation*}

The next lemma describes a growth property of convex functions, and a detailed proof can be found in \cite[Lemma 4.6]{GX JFA}.
\begin{lemma}\label{Lem: proper}
Let $f: \mathbb{R}^n \to \mathbb{R}$ be a $C^1$-smooth convex function such that $\nabla f(x_0) = 0$ for some $x_0 \in \mathbb{R}^n$.
If $f$ is $C^2$-smooth and strictly convex in a neighborhood of $x_0$, then $\lim_{\|x\| \to +\infty} f(x) = +\infty$.
\end{lemma}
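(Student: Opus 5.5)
The plan is to reduce the claim to one dimension, by restricting $f$ to rays emanating from $x_0$, and then to make the resulting growth uniform using compactness of a small sphere around $x_0$.

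First, since $f$ is convex and $C^1$ with $\nabla f(x_0)=0$, the point $x_0$ is a global minimizer. Indeed, $f(x)\ge f(x_0)+\langle\nabla f(x_0),x-x_0\rangle=f(x_0)$ for all $x$. Next we use the local hypothesis. Choose $\delta>0$ so that $f$ is $C^2$ and strictly convex on the closed ball $\overline{B}(x_0,\delta)$. Strict convexity on this ball gives, for each unit vector $u$, that $g_u(t)=f(x_0+tu)$ is strictly convex on $[0,\delta]$ with $g_u'(0)=0$. Hence $g_u(\delta)>g_u(0)=f(x_0)$.

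Define
\[
m:=\min_{\|u\|=1}\bigl(f(x_0+\delta u)-f(x_0)\bigr).
\]
By continuity of $f$ and compactness of the unit sphere, the minimum is attained, and by the previous step $m>0$. This compactness step is the key point. Note that strict convexity alone, without the $C^2$ hypothesis, already gives $m>0$. The $C^2$ assumption is therefore not essentially needed, though one could alternatively use positive-definiteness of the Hessian where it holds.

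Now take any $x$ with $\|x-x_0\|=R>\delta$, and write $x=x_0+Ru$ with $\|u\|=1$. By convexity of $g_u$ on $[0,R]$, its difference quotients are nondecreasing:
\[
\frac{g_u(R)-g_u(0)}{R}\ \ge\ \frac{g_u(\delta)-g_u(0)}{\delta}\ \ge\ \frac{m}{\delta}.
\]
Therefore
\[
f(x)\ \ge\ f(x_0)+\frac{m}{\delta}\,\|x-x_0\|.
\]
This is a linear lower bound, uniform in the direction $u$. Since $\|x-x_0\|\ge\|x\|-\|x_0\|\to+\infty$, we obtain $f(x)\to+\infty$.

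The only subtle step is obtaining the uniform positive constant $m$, which rests on compactness of the sphere and on strict convexity near $x_0$. Everything else is the standard monotonicity of slopes for convex functions of one variable.
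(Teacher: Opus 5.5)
Your proof is correct. The uniform gap $m>0$ on the sphere $\|x-x_0\|=\delta$, obtained by compactness, combined with monotone difference quotients of the convex functions $t\mapsto f(x_0+tu)$, gives the uniform linear bound $f(x)\ge f(x_0)+\frac{m}{\delta}\|x-x_0\|$; you are also right that the $C^2$ hypothesis is never used, and in fact only the fact that $x_0$ is a strict local minimizer is needed. The paper does not prove this lemma itself but defers to Lemma 4.6 of the cited Ge--Xu paper, and your sphere-plus-convexity argument is the standard proof of this fact, so it serves as a self-contained replacement.
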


\subsection{Proof of Theorem \ref{Thm: main}}

We split Theorem \ref{Thm: main} into three separate statements and prove each one individually.

\begin{theorem}\label{Thm: main1}
In Theorem \ref{Thm: main}, the statement \textup{(i)} is equivalent to the statement \textup{(ii)}.
\end{theorem}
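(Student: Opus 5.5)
The direction (ii)$\Rightarrow$(i) is immediate. If $r(t)\to r^*$, then by continuity $\kappa(r(t))\to\kappa(r^*)$, so the right-hand side of (\ref{Eq: CF2}) tends to $-J(r^*)(\kappa(r^*)-\kappa^*)$. Since $r(t)$ converges, there is a sequence $t_k\to\infty$ with $\frac{d}{dt}r(t_k)\to 0$. Hence $J(r^*)(\kappa(r^*)-\kappa^*)=0$. By Lemma \ref{Lem: matrix}, this means $\kappa(r^*)-\kappa^*=c\mathbf{1}$. Summing over edges and using (\ref{Eq: GB}) for both $\kappa(r^*)$ and $\kappa^*$ gives $nc=0$, so $c=0$ and $\kappa(r^*)=\kappa^*$.

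For (i)$\Rightarrow$(ii), I will use a convex potential. Since $J$ is symmetric (Lemma \ref{Lem: matrix}), the form $\sum_i(\kappa_i-\kappa_i^*)\,dr_i$ is closed on $\mathbb{R}^n$, so
\[
F(r)=\int_{r^*}^{r}\sum_{i=1}^n(\kappa_i-\kappa_i^*)\,dr_i
\]
is well defined. It satisfies $\nabla F=\kappa-\kappa^*$ and $\operatorname{Hess}F=J\ge 0$, so $F$ is convex with a critical point at $r^*$. I restrict $F$ to the hyperplane $P$ from Lemma \ref{Lem: invariant}, shifting $r^*$ by a multiple of $\mathbf{1}$ so that $r^*\in P$; this is allowed because $\kappa$ is scale invariant. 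On $P$ the Hessian $J$ is positive definite, since its kernel is spanned by $\mathbf{1}$, which is transverse to $P$. Lemma \ref{Lem: proper}, applied in coordinates on $P$, then gives that $F|_P$ is proper. By Lemma \ref{Lem: rigidity}, $r^*$ is the unique critical point of $F|_P$.

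Along the flow, $\frac{d}{dt}F(r(t))=(\kappa-\kappa^*)^{\mathrm T}\frac{dr}{dt}=-(\kappa-\kappa^*)^{\mathrm T}J(\kappa-\kappa^*)\le 0$. So $r(t)$ stays in the compact sublevel set $\{F\le F(r_0)\}\cap P$. The vector field is smooth, so the solution exists for all time. It remains bounded, and by monotonicity of $F$ together with a LaSalle-type argument every $\omega$-limit point satisfies $J(\kappa-\kappa^*)=0$, hence $\kappa=\kappa^*$ by the argument of the first paragraph, hence equals $r^*$. Therefore $r(t)\to r^*$.

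For exponential convergence, I linearize at $r^*$. The flow restricted to $P$ is $\dot r=-J(\kappa(r)-\kappa^*)$, and its derivative at $r^*$ is $-J(r^*)^2$. Restricted to the tangent space $\mathbf{1}^\perp$ of $P$, which is invariant under $J$, this matrix has eigenvalues $-\lambda_i^2<0$, where $\lambda_i>0$ are the nonzero eigenvalues of $J(r^*)$. So $r^*$ is a hyperbolic attracting equilibrium within $P$, and since the trajectory enters every neighborhood of it, convergence is exponential. I expect the main obstacle to be the properness and uniqueness step on $P$, namely checking that Lemma \ref{Lem: proper} applies to $F|_P$ and that Lemma \ref{Lem: rigidity} rules out other limits; the rest is routine ODE theory.
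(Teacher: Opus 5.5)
Your proof is correct. The direction (ii)$\Rightarrow$(i), the convex potential on $P$, its properness via Lemma \ref{Lem: proper}, and the long-time existence all match the paper's argument; your explicit normalization of $r^*$ into $P$ and your use of $J(r(t))\to J(r^*)$ are slightly more careful than the written proof.

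You diverge from the paper in how you obtain convergence and its rate. The paper uses neither LaSalle nor linearization. Instead it takes the Calabi energy $\widetilde{\mathcal{C}}=\frac12\|\kappa-\kappa^*\|^2$ as a second Lyapunov function, with $\frac{d}{dt}\widetilde{\mathcal{C}}=-(\kappa-\kappa^*)^{\mathrm T}J^2(\kappa-\kappa^*)$. Two facts then give $\widetilde{\mathcal{C}}(t)\le\widetilde{\mathcal{C}}(0)e^{-\lambda_0 t}$ for all $t\ge 0$:
\begin{itemize}
\item $\kappa-\kappa^*\perp\mathbf{1}$ by (\ref{Eq: GB});
\item the nonzero eigenvalues of $J(r)^2$ have a uniform lower bound $\lambda_0>0$ on the compact set containing the orbit.
\end{itemize}
Lemma \ref{Lem: rigidity} and the inverse function theorem then make $\kappa|_P$ a diffeomorphism with locally Lipschitz inverse, which transfers this decay to $\|r(t)-r^*\|$. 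The paper therefore gets convergence and the exponential rate in one step, valid from $t=0$, with a rate controlled by the sublevel set. The cost is needing the uniform spectral bound and the Lipschitz inverse.

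Your route first uses LaSalle to identify the $\omega$-limit set as $\{r^*\}$. It then treats $r^*$ as a hyperbolic sink within $P$, with linearization $-J(r^*)^2$ negative definite on $\mathbf{1}^\perp$. This needs only standard ODE facts and gives an asymptotic rate governed by the smallest nonzero eigenvalue of $J(r^*)^2$. However, that rate holds only eventually and with a non-explicit constant. Your argument is essentially the template the paper itself uses for the fractional flow in Theorem \ref{Thm: main2}.
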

\proof
\noindent\textbf{(ii)\,$\Rightarrow$\,(i):}
Suppose the solution $r(t)$ to the modified Calabi flow \eqref{Eq: CF2} converges to $r^*$ as $t\to\infty$. 
By continuity of the curvature $\kappa$,
\begin{equation*}
\kappa(r^*)
=\kappa\!\left(\lim_{t\to\infty}r(t)\right)
=\lim_{t\to\infty}\kappa(r(t)).
\end{equation*}
Applying the Mean Value Theorem, there exists a sequence $t_n\in(n,n+1)$ such that for every $i$,
\begin{equation*}
r_i(n+1)-r_i(n)=r_i'(t_n)=\Delta\bigl(\kappa(r(t_n))-\kappa^*\bigr)_i\to 0,\quad \text{as } n\to\infty.
\end{equation*}
This implies $\kappa(r^*)-\kappa^*=\lim_{n\to\infty}\bigl(\kappa(r(t_n))-\kappa^*\bigr)$ belongs to the kernel of $\Delta$. 
By Lemma \ref{Lem: matrix}, we have $\kappa(r^*)-\kappa^*=c\mathbf{1}^{\mathrm{T}}$ for some constant $c\in\mathbb{R}$. 
Since $\kappa^*$ satisfies \eqref{Eq: GB},
\begin{equation*}
\sum_{i=1}^n\bigl(\kappa(r^*)-\kappa^*\bigr)
=2(|V|-|E|)-2(|V|-|E|)=0,
\end{equation*}
which forces $c=0$. 
Hence $\kappa(r^*)=\kappa^*$, so $r^*$ is a weight with the prescribed curvature $\kappa^*$.

\noindent\textbf{(i)\,$\Rightarrow$\,(ii):}
Assume there exists a weight $r^*$ such that $\kappa(r^*)=\kappa^*$. 
Define the function $f:P\to\mathbb{R}$ by
\begin{equation}\label{Eq: function}
f(r)=\int_{r^*}^{r}\sum_{i=1}^n\bigl(\kappa_i-\kappa_i^*\bigr)\,dr_i.
\end{equation}
One directly verifies $f(r^*)=0$ and $\nabla_r f(r^*)=0$. Moreover,
$\operatorname{Hess}_r f=J$. 
By Lemma \ref{Lem: matrix},
$\operatorname{Hess}_r f$ is positive semi-definite with kernel orthogonal to the affine hyperplane $P$. 
By Lemma \ref{Lem: proper}, we obtain $\lim_{\|r\|\to\infty}f(r)\big|_{P}=+\infty$.
Thus $f|_{P}$ is a proper convex function satisfying $0=f(r^*)\le f(r)$ for all $r\in P$.

Along the modified Calabi flow \eqref{Eq: CF2}, we have
\begin{equation*}
\frac{d}{dt}f(r(t))
=\sum_{i=1}^n\frac{\partial f}{\partial r_i}\frac{dr_i}{dt}
=\sum_{i=1}^n(\kappa-\kappa^*)_i\Delta(\kappa-\kappa^*)_i
=-(\kappa-\kappa^*)^{\mathrm{T}}J(\kappa-\kappa^*)\le 0.
\end{equation*}
Hence $f(r(t))$ is non-increasing, and $0\le f(r(t))\le f(r(0))$ for all $t\ge 0$. 
By Lemma \ref{Lem: invariant}, 
the solution $r(t)$ is confined to the affine hyperplane $P$.
Combined with the properness of $f|_P$, $\{r(t)\}_{t \geq 0}$ is contained in a compact subset of $P$.
This implies the solution to the modified Calabi flow (\ref{Eq: CF2}) exists for all time.

By Lemma \ref{Lem: matrix}, $J$ is positive semi-definite of rank $n-1$ with kernel $\{c\mathbf{1}^\mathrm{T} \mid c \in \mathbb{R}\}$, which is orthogonal to $P$.
Consequently, $J^2$ inherits this structure and restricts to a strictly positive definite matrix on $P$. 
By continuity of eigenvalues for matrix-valued functions, all non-zero eigenvalues of $J^2(r)$ admit a uniform lower bound $\lambda_0>0$ over the compact set containing $\{r(t)\}$.
Now consider the Calabi energy \eqref{Eq: energy}. 
Direct calculations yield
\begin{equation*}
\frac{d}{dt}\widetilde{\mathcal{C}}(r(t))
=\sum_{i=1}^n\frac{\partial\widetilde{\mathcal{C}}}{\partial r_i}\frac{dr_i}{dt}
=-(\kappa-\kappa^*)^{\mathrm{T}}J^2(\kappa-\kappa^*)
\le -\lambda_0\widetilde{\mathcal{C}}(t).
\end{equation*}
This yields
\begin{equation*}
\widetilde{\mathcal{C}}(t)\le\widetilde{\mathcal{C}}(0)\,e^{-\lambda_0 t},\quad \forall t\ge 0.
\end{equation*}
Finally, by Lemma \ref{Lem: rigidity} and Lemma \ref{Lem: matrix}, the restriction $\kappa|_{P}:P\to\kappa(P)$ is a $C^1$-diffeomorphism. The inverse function theorem implies $\kappa^{-1}$ is locally Lipschitz continuous on $\kappa(P)$.
The compactness of $\{r(t)\}$ further provides constants $C_1,C_2>0$ such that
\begin{equation*}
\|r(t)-r^*\|^2
\le C_1\|\kappa(r(t))-\kappa(r^*)\|^2
=2C_1\widetilde{\mathcal{C}}(t)
\le C_2 e^{-\lambda_0 t}.
\end{equation*}
This establishes exponential convergence for the modified Calabi flow \eqref{Eq: CF2}. 
\qed

\begin{theorem}\label{Thm: main2}
In Theorem \ref{Thm: main}, the statement \textup{(i)} is equivalent to the statement \textup{(iii)}.
\end{theorem}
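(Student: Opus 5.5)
The plan is to follow the proof of Theorem \ref{Thm: main1} closely, replacing $J$ by $J^s$ throughout. The only new ingredient is control of the spectrum of $J^s$ along the flow.

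\textbf{(iii)$\Rightarrow$(i).} Suppose $r(t)\to r^*$. The Mean Value Theorem argument from Theorem \ref{Thm: main1} gives times $t_n\in(n,n+1)$ with $J^s(r(t_n))\bigl(\kappa(r(t_n))-\kappa^*\bigr)\to 0$. By the definition \eqref{Eq: fractional Laplace}, with the convention $0^s=0$, the matrix $J^s(r)$ has the same kernel as $J(r)$, namely $\{c\mathbf{1}^\mathrm{T}\}$ by Lemma \ref{Lem: matrix}. On $\mathbf{1}^\perp$ its eigenvalues are $\lambda_k^s$ with $\lambda_k>0$. By continuity of $J$ at $r^*$, the nonzero eigenvalues of $J(r(t_n))$ stay in a fixed compact subinterval of $(0,\infty)$. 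Hence the nonzero eigenvalues of $J^s(r(t_n))$ also stay in a compact subinterval of $(0,\infty)$, whatever the sign of $s$, and $J^s(r(t_n))\to J^s(r^*)$. Therefore $\kappa(r^*)-\kappa^*\in\ker J^s(r^*)$, so $\kappa(r^*)-\kappa^*=c\mathbf{1}^\mathrm{T}$. Condition \eqref{Eq: GB} then forces $c=0$, as in Theorem \ref{Thm: main1}.

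\textbf{(i)$\Rightarrow$(iii).} I reuse the potential $f$ from \eqref{Eq: function}. By Lemma \ref{Lem: invariant} the flow stays in $P$, and $f|_P$ is proper and nonnegative by Lemma \ref{Lem: proper}. Along \eqref{Eq: CF-F} we get
\[
\frac{d}{dt}f(r(t))=-(\kappa-\kappa^*)^\mathrm{T}J^s(\kappa-\kappa^*)\le 0,
\]
because $J^s$ is positive semi-definite. So $r(t)$ stays in the compact sublevel set $K=\{r\in P: f(r)\le f(r(0))\}$. The vector field $r\mapsto -J^s(\kappa-\kappa^*)$ is locally Lipschitz, since $J$ is smooth and its nonzero eigenvalues are bounded away from $0$ on compact sets. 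The spectral function $\lambda\mapsto\lambda^s$ is then smooth on the relevant spectral interval, which can be seen via a Cauchy integral representation of $J^s$ restricted to $\mathbf{1}^\perp$. Local existence and uniqueness together with confinement to $K$ give existence for all time.

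For the exponential rate, compactness of $K$ and continuity of eigenvalues give a uniform constant $\lambda_0>0$ bounding below the nonzero eigenvalues of $J^{s+1}=J\cdot J^s$ on $K$. The vector $\kappa-\kappa^*$ lies in $\mathbf{1}^\perp$, since both $\kappa$ and $\kappa^*$ satisfy \eqref{Eq: GB}. Hence
\[
\frac{d}{dt}\widetilde{\mathcal C}=-(\kappa-\kappa^*)^\mathrm{T}J\,J^s(\kappa-\kappa^*)\le-2\lambda_0\widetilde{\mathcal C},
\]
where I use that $J$ and $J^s$ commute. This gives $\widetilde{\mathcal C}(t)\le \widetilde{\mathcal C}(0)e^{-2\lambda_0t}$. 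Finally, the local Lipschitz bound for $\kappa|_P^{-1}$ from Lemma \ref{Lem: rigidity} and Lemma \ref{Lem: matrix} on the compact set $K$ converts this into $\|r(t)-r^*\|^2\le Ce^{-2\lambda_0t}$, exactly as before.

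The main obstacle is the regularity and uniformity of $J^s$ when $s\le 0$. With the convention $0^s=0$, the map $J\mapsto J^s$ is not continuous in general. It is continuous here only because the zero eigenvalue is simple, with fixed eigenvector $\mathbf{1}$ independent of $r$, and the remaining eigenvalues stay uniformly positive on compacts. So I would first prove a small lemma: on $P$, $J^s(r)=\Pi\,(\Pi J(r)\Pi)^s\,\Pi$, where $\Pi$ is the orthogonal projection onto $\mathbf{1}^\perp$ and the power is taken inside $\operatorname{End}(\mathbf{1}^\perp)$. This expression is smooth in $r$, which supplies both the Lipschitz property and the uniform spectral bounds used above.
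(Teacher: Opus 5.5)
Your proof is correct. In (iii)$\Rightarrow$(i) it is the argument the paper has in mind when it calls the proof ``essentially identical'' to that of Theorem \ref{Thm: main1}. In (i)$\Rightarrow$(iii), however, you take a different and more direct route.

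\textbf{The paper's route.} After the same confinement argument with $f$, the paper applies the Mean Value Theorem to $f(r(t))$ to find times $t_n$ with $\kappa(r(t_n))\to\kappa^*$. It identifies subsequential limits with $r^*$ via Lemma \ref{Lem: rigidity}. It then upgrades to convergence of the whole trajectory by noting that $D\Gamma|_{r^*}=-J^{s+1}$ is negative definite on $P$, so $r^*$ is a local attractor and Lyapunov stability theory applies. The exponential rate is only asserted afterwards, in a remark.

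\textbf{Your route.} You skip both the subsequence and the stability theorem. Because $J$ and $J^s$ commute, you get $\frac{d}{dt}\widetilde{\mathcal C}=-(\kappa-\kappa^*)^{\mathrm T}J^{s+1}(\kappa-\kappa^*)\le-2\lambda_0\widetilde{\mathcal C}$ on the compact set $K$. The Lipschitz bound for $\kappa|_P^{-1}$ then turns this into exponential convergence of $r(t)$, which is exactly what (iii) asserts.

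\textbf{What each buys.} The paper's potential-plus-subsequence scheme does not depend on the linear, commuting structure of $\Delta^s$. That is why a variant of it still works for the $p$-th Calabi flow in Theorem \ref{Thm: main3}, where the Calabi energy has no such monotonicity. Your route gives convergence and the rate in one step, with no external stability theorem.

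Your lemma $J^s=\Pi(\Pi J\Pi)^s\Pi$ also supplies something the paper leaves implicit. For $s<1$, with the convention $0^s=0$, the map $A\mapsto A^s$ on positive semi-definite matrices fails to be Lipschitz at singular matrices, and for $s\le 0$ it is not even continuous there. So smoothness of the vector field (hence well-posedness of the flow), together with the uniform spectral bounds used in both directions, relies on two facts: the kernel of $J(r)$ is the fixed line $\mathbb{R}\mathbf{1}$, and the remaining eigenvalues stay away from $0$ on compact sets.
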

\proof
\noindent\textbf{(iii)\,$\Rightarrow$\,(i):}
Combining \eqref{Eq: fractional Laplace} with Lemma \ref{Lem: matrix}, 
the matrix power $J^{s}$ shares the same spectral properties as $J$ for any $s\in\mathbb{R}$: 
it is positive semi-definite of rank $n-1$, with kernel $\{c\mathbf{1}^\mathrm{T} \mid c \in \mathbb{R}\}$, which is orthogonal to the affine hyperplane $P$. 
The argument is essentially identical to that in Theorem \ref{Thm: main1}, so we omit the detailed proof here.

\noindent\textbf{(i)\,$\Rightarrow$\,(iii):}
By the arguments similar to that in Theorem \ref{Thm: main1},
the function $f(r)$ defined by (\ref{Eq: function}) can also be applied to this theorem.
Along the fractional Calabi flow (\ref{Eq: CF-F}), we  have
\begin{equation*}
\frac{d}{dt}f(r(t))
=\sum_{i=1}^n \frac{\partial f}{\partial r_i}\frac{dr_i}{dt}
=\sum_{i=1}^n \bigl(\kappa-\kappa^*\bigr)_i\,\Delta^s\bigl(\kappa-\kappa^*\bigr)_i
=-\bigl(\kappa-\kappa^*\bigr)^{\mathrm{T}}J^{s}\bigl(\kappa-\kappa^*\bigr)\le 0.
\end{equation*}
Hence $0\le f(r(t))\le f(r(0))$ for all $t\ge 0$. 
By Lemma \ref{Lem: invariant} and the properness of $f|_{P}$, 
we conclude that $\{r(t)\}_{t\ge 0}$ remains within a compact subset of $P$. 
This implies the solution of the fractional Calabi flow (\ref{Eq: CF-F}) exists for all time and $f(r(t))|_P$ converges as $t \to +\infty$.

Applying the Mean Value Theorem, there exists a sequence $t_n\in(n,n+1)$ such that
\begin{equation*}
f(r(n+1))-f(r(n))
=\frac{d}{dt}f(r(t))\bigg|_{t=t_n}
=-\bigl(\kappa(r(t_n))-\kappa^*\bigr)^{\mathrm{T}}J^{s}\bigl(\kappa(r(t_n))-\kappa^*\bigr)\to 0,\, \text{as}\, n\to\infty.
\end{equation*}
By Lemma \ref{Lem: matrix},
the above limit implies $\lim_{n \to +\infty} (\kappa(r(t_n))-\kappa^*)$ lies in the kernel of $J^s$.
Hence, there exists a constant $c \in \mathbb{R}$ such that
\begin{equation*}
\lim_{n \to +\infty} (\kappa(r(t_n))-\kappa^*) = c\mathbf{1}^\mathrm{T}.
\end{equation*}
Since $\kappa^*$ satisfies (\ref{Eq: GB}), we compute
\begin{equation*}
\sum_{i=1}^N (\kappa(r(t_n))-\kappa^*)
= 2(|V| - |E|)-2(|V| - |E|) =0,
\end{equation*}
which forces $c=0$. 
Consequently, $\lim_{n\to\infty}\kappa(r(t_n))=\kappa^*$.
As $\{r(t_n)\}$ lies in a compact subset of $P$, we may extract a convergent subsequence, still denoted $\{r(t_n)\}$, such that $r(t_n)\to\bar{r}\in P$. By continuity of the curvature $\kappa$,
\begin{equation*}
\kappa(\bar{r})
=\lim_{n\to\infty}\kappa(r(t_n))
=\kappa^*=\kappa(r^*).
\end{equation*}
Lemma \ref{Lem: rigidity} then yields $\bar{r}=r^*$, so $r(t_n)\to r^*$.

To establish the global convergence of the solution $r(t)$ to $r^*$, define the vector field $\Gamma(r) = \Delta^s(\kappa(r)-\kappa^*)$.
The Fr\'{e}chet derivative of $\Gamma$ at $r^*$ is $D\Gamma|_{r=r^*}=-J^{s+1}$.
Restricting this derivative to the affine hyperplane $P$, $D\Gamma|_{r=r^*}$ becomes strictly negative definite.
Therefore, $r^*$ is a local attractor for the fractional Calabi flow \eqref{Eq: CF-F}. 
The desired global convergence follows from the standard Lyapunov stability theory; see, for instance, \cite[Chapter 5]{Pontryagin}.
\qed

\begin{remark}
The exponential convergence of solutions to the fractional Calabi flow \eqref{Eq: CF-F} can be established using the Calabi energy \eqref{Eq: energy}, via an argument analogous to that used in the proof of Theorem \ref{Thm: main1}.
\end{remark}

\begin{theorem}\label{Thm: main3}
In Theorem \ref{Thm: main}, the statement \textup{(i)} is equivalent to the statement \textup{(iv)}.
\end{theorem}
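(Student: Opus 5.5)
We prove the two implications separately. As in Theorems \ref{Thm: main1} and \ref{Thm: main2}, the function $f$ from \eqref{Eq: function} serves as a Lyapunov function on the hyperplane $P$.

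\textbf{(iv)$\Rightarrow$(i).} Suppose $r(t)\to r^*$. By the Mean Value Theorem there are $t_n\in(n,n+1)$ with $\Delta_p(\kappa(r(t_n))-\kappa^*)\to 0$. By continuity, $g:=\kappa(r^*)-\kappa^*$ satisfies $\Delta_p g=0$ at $r^*$. Pairing with $g$ and using the symmetrization in \eqref{Eq: F1} gives
\begin{equation*}
0=\sum_i g_i\,\Delta_p g_i=-\frac12\sum_i\sum_{j\sim i}\Bigl(-\frac{\partial\kappa_i}{\partial r_j}\Bigr)|g_j-g_i|^{p}.
\end{equation*}
For adjacent edges $e_i\sim e_j$ sharing a vertex $x$, the weight $-\partial\kappa_i/\partial r_j=2\omega_i\omega_j/m^2(x)$ is strictly positive by \eqref{Eq: F3}. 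Hence $g_i=g_j$ for all adjacent edges. Since $G$ is connected, its line graph is connected, so $g=c\mathbf{1}^{\mathrm T}$. The identity \eqref{Eq: GB}, applied to both $\kappa(r^*)$ and $\kappa^*$, forces $c=0$.

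\textbf{(i)$\Rightarrow$(iv).} Assume $\kappa(r^*)=\kappa^*$ and set $g=\kappa-\kappa^*$. Along \eqref{Eq: CF-P} the same symmetrization gives
\begin{equation*}
\frac{d}{dt}f(r(t))=\sum_i g_i\,\Delta_p g_i=-\frac12\sum_i\sum_{j\sim i}\Bigl(-\frac{\partial\kappa_i}{\partial r_j}\Bigr)|g_j-g_i|^p\le 0.
\end{equation*}
So $0\le f(r(t))\le f(r(0))$. Lemma \ref{Lem: invariant} keeps $r(t)$ in $P$, and $f|_P$ is proper (shown in the proof of Theorem \ref{Thm: main1}). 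Hence $r(t)$ stays in a compact set $K\subset P$.

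The right-hand side of \eqref{Eq: CF-P} is continuous for $p>1$. It is locally Lipschitz for $p\ge 2$, but only Hölder for $1<p<2$. So local existence comes from Peano's theorem, and the a priori bound gives global existence. Uniqueness for $p<2$ is delicate; the argument should be written for an arbitrary solution so that it does not rely on uniqueness.

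\textbf{Convergence.} This is the main obstacle, because the linearization argument of Theorem \ref{Thm: main2} breaks down when $p\ne2$.
\begin{itemize}
\item Since $f(r(t))$ is monotone and bounded below, it converges. The dissipation $D(r)=-\frac{d}{dt}f$ is integrable on $[0,\infty)$, so there are $t_n\to\infty$ with $D(r(t_n))\to0$.
\item On $K$ the weights $-\partial\kappa_i/\partial r_j$ for adjacent edges are bounded below by a positive constant. Therefore $|g_j-g_i|\to0$ along $t_n$ for every adjacent pair, and by connectedness $g(r(t_n))\to c\mathbf 1^{\mathrm T}$. Then \eqref{Eq: GB} gives $c=0$.
\item Passing to a subsequence, $r(t_n)\to\bar r\in P$ with $\kappa(\bar r)=\kappa^*$. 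Lemma \ref{Lem: rigidity}, together with the fixed sum of the $r_i$ on $P$, gives $\bar r=r^*$. Thus $f(r(t_n))\to f(r^*)=0$.
\item Since $f(r(t))$ is monotone, $f(r(t))\to 0$ along the whole flow.
\item Finally, $f|_P$ is convex with strictly positive definite Hessian $J$ on $P$ near $r^*$, and $r^*$ is its unique minimizer. So the sublevel sets $\{f\le\varepsilon\}\cap P$ shrink to $\{r^*\}$ as $\varepsilon\to0$, which gives $r(t)\to r^*$.
\end{itemize}
This monotone-Lyapunov route avoids any linearization at $r^*$, which is unavailable when $p\ne2$. It also explains why only plain, rather than exponential, convergence is claimed in (iv).
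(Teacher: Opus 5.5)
Your proposal is correct and follows essentially the same route as the paper: the Lyapunov function $f$ with the symmetrized dissipation identity, properness of $f|_P$ for compactness, a sequence $t_n$ along which the dissipation vanishes, then connectedness, \eqref{Eq: GB} and Lemma \ref{Lem: rigidity} to identify the limit point as $r^*$, and finally $f(r(t))\to 0$ combined with a positive lower bound for $f$ away from $r^*$ in $P$ to get full convergence. Two of your additions tighten steps the paper treats loosely: the uniform positive lower bound on the weights $-\partial\kappa_i/\partial r_j$ over the compact set $K$, which is needed because $\kappa(r(t_n))$ is not yet known to converge, and your remark on Peano existence and possible non-uniqueness for $1<p<2$.
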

\proof
\noindent\textbf{(iv)\,$\Rightarrow$\,(i):}
Suppose that the solution $r(t)$ to the $p$-th Calabi flow \eqref{Eq: CF-P} converges to $r^*$ as $t\to +\infty$. 
By the continuity of the curvature $\kappa$, 
we have $\kappa(r^*)=\lim_{t\to\infty}\kappa(r(t))$. Moreover, there exists a sequence $t_n\in(n,n+1)$ such that for every $i$,
\begin{equation*}
r_i(n+1) - r_i(n)
= r'_i(t_n)
= \Delta_p \left(\kappa(r(t_n)) - \kappa^* \right)_i
\to 0,
\quad \text{as } n \to +\infty.
\end{equation*}
Define $\widetilde{\kappa}
=\lim_{n\rightarrow +\infty}(\kappa(r(t_n))-\kappa^*)
=\kappa(r^*)-\kappa^*$.
which immediately gives $\Delta_p \widetilde{\kappa}=0$. 
Analogous to \eqref{Eq: F1}, for any function $g:E\to\mathbb{R}$, the following identity holds:
\begin{equation}\label{Eq: F2}
\begin{aligned}
\sum_{i=1}^n g_i\Delta_{p} g_i
&=\sum_{i=1}^n \sum_{j \sim i} \left( -\frac{\partial \kappa_i}{\partial r_j} \right) |g_j - g_i|^{p-2} (g_j - g_i)g_i\\
&=\frac{1}{2} \sum_{i=1}^n \sum_{j \sim i} \left( -\frac{\partial \kappa_i}{\partial r_j} \right)|g_j - g_i|^{p-2} (g_j - g_i)g_i\\
&\ \ \ \ + \frac{1}{2} \sum_{i=1}^n \sum_{j \sim i} \left( -\frac{\partial \kappa_i}{\partial r_j} \right)|g_j - g_i|^{p-2} (g_i - g_j)g_j\\
&=\frac{1}{2} \sum_{i=1}^n \sum_{j \sim i} \left( \frac{\partial \kappa_i}{\partial r_j} \right)|g_j - g_i|^{p}.
\end{aligned}
\end{equation}
Substituting $g=\widetilde{\kappa}$ into (\ref{Eq: F2}) yields
\begin{equation*}
0=\widetilde{\kappa}^\mathrm{T} \Delta_{p} \widetilde{\kappa}
=\sum_{i=1}^n \widetilde{\kappa}_i \Delta_{p} \widetilde{\kappa}_i
= \frac{1}{2} \sum_{i=1}^n \sum_{j \sim i} \frac{\partial \kappa_i}{\partial r_j} |\widetilde{\kappa}_j - \widetilde{\kappa}_i|^p.
\end{equation*}
From \eqref{Eq: F3}, we know that $\frac{\partial \kappa_i}{\partial r_j}<0$ for all adjacent edges $j\sim i$, 
which forces $\widetilde{\kappa}_i = \widetilde{\kappa}_j$. 
By the connectedness of the underlying graph, $\widetilde{\kappa}$ must be a constant function, meaning $\widetilde{\kappa} \equiv c$ for some constant $c\in\mathbb{R}$, or equivalently, $\kappa(r^*) -\kappa^* = c\mathbf{1}^\mathrm{T}$. 
Since $\kappa^*$ satisfies \eqref{Eq: GB}, a direct computation gives
\begin{equation*}
\sum_{i=1}^n \left(\kappa(r^*)-\kappa^*\right)
= 2(|V| - |E|)-2(|V| - |E|)= 0,
\end{equation*}
which implies $c=0$. 
Therefore, $\kappa(r^*)=\kappa^*$.

\noindent\textbf{(i)\,$\Rightarrow$\,(iv):}
We employ the same auxiliary function $f(r)$ defined in \eqref{Eq: function}, following arguments parallel to those in Theorem \ref{Thm: main1}. 
Along the $p$-th Calabi flow \eqref{Eq: CF-P}, we compute
\begin{equation*}
\begin{aligned}
\frac{df(r(t))}{dt}
&= \sum_{i=1}^n \frac{\partial f}{\partial r_i} \cdot \frac{dr_i}{dt} \\
&= \sum_{i=1}^n (\kappa-\kappa^*)_i \Delta_p(\kappa-\kappa^*)_i \\
&= \frac{1}{2} \sum_{i=1}^n \sum_{j \sim i} (\frac{\partial \kappa_i}{\partial r_j}) \left| (\kappa-\kappa^*)_j - (\kappa-\kappa^*)_i \right|^p\\
&\leq 0,
\end{aligned}
\end{equation*}
where the third line uses \eqref{Eq: F2}. 
This implies that $0 \leq f(r(t)) \leq f(r(0))$ for all $t\geq 0$. 
Combining Lemma \ref{Lem: invariant} and the properness of $f|_P$, 
we deduce that $\{r(t)\}_{t\geq 0}$ is contained in a compact subset of $P$. 
As a consequence, the solution to the $p$-th Calabi flow \eqref{Eq: CF-P} exists for all time, and $f(r(t))|_P$ converges as $t\to\infty$.

Furthermore, there exists a sequence $t_n\in(n,n+1)$ such that as $n \to +\infty$,
\begin{equation*}
\begin{aligned}
f(r(n+1)) - f(r(n))
&= \frac{df(r(t))}{dt}\bigg|_{t=t_n} \\
&= \frac{1}{2} \sum_{i=1}^n \sum_{j \sim i} (\frac{\partial \kappa_i}{\partial r_j}) \left| (\kappa(r(t_n)) - \kappa^*)_j - (\kappa(r(t_n)) - \kappa^*)_i \right|^p \\
&\to 0.
\end{aligned}
\end{equation*}
Repeating the arguments used in the part ``(iv)\,$\Rightarrow$\,(i)'',
we obtain $\lim_{n \to +\infty} \kappa(r(t_n)) = \kappa^*=\kappa(r^*)$.
Given that $\{r(t)\} \subset \subset P$,
we can extract a convergent subsequence $\{r(t_{n_k})\}$ such that $\lim_{k\to\infty}r(t_{n_k})=\overline{r}\in P$.
By continuity of $\kappa$, we have $\kappa(\overline{r}) = \lim_{k \to +\infty} \kappa(r(t_{n_k})) = \kappa^*$.
Applying Lemma \ref{Lem: rigidity}, 
we conclude that $\overline{r}=r^*$, so $\lim_{k\to\infty}r(t_{n_k})=r^*$.

It remains to prove the global convergence of $r(t)$ to $r^*$. We proceed by contradiction: suppose that $\lim_{t\to\infty}r(t)\neq r^*$. Then there exist a constant $\delta>0$ and a sequence $\xi_n\to\infty$ such that $\|r(\xi_n)-r^*\|>\delta$ for all $n$. Let $B_{P}(r^*,\delta)=P\cap B(r^*,\delta)$ denote the open ball centered at $r^*$ with radius $\delta$ restricted to $P$, so that $\{r(\xi_n)\}\subseteq P\setminus\overline{B_{P}(r^*,\delta)}$. Since $f|_P$ is proper and convex with a unique global minimum at $r^*$, $f$ attains a positive lower bound on the compact set $P\setminus B_{P}(r^*,\delta)$. That is, there exists a constant $C>0$ such that $f(r)\geq C$ for all $r\in P\setminus B_{P}(r^*,\delta)$, which implies $f(r(\xi_n))\geq C>0$ for all $n$. However, $f(r(t))$ is convergent and $\lim_{k\to\infty}f(r(t_{n_k}))=f(r^*)=0$, so $\lim_{t\to\infty}f(r(t))=0$, which contradicts the lower bound $f(r(\xi_n))\geq C>0$. Therefore, the initial assumption is false, and $\lim_{t\to\infty}r(t)=r^*$.

\qed

\end{document}